\title[Two-cycle gentle algebras]%
  {The derived equivalence classification \\ of gentle two-cycle algebras}
\author{Grzegorz Bobi\'nski}
\address{Faculty of Mathematics and Computer Science \\ Nicolaus
Copernicus University \\ ul.~Chopina 12/18 \\ 87-100 Toru\'n \\
Poland}
\email{gregbob@mat.uni.torun.pl}
\keywords{two-cycle gentle algebra, derived category, derived equivalence}
\subjclass[2010]{16G20, 18E30}
\newcounter{claim}[section]
\newtheorem{corollary}[claim]{Corollary}
\newtheorem{lemma}[claim]{Lemma}
\newtheorem{proposition}[claim]{Proposition}
\newtheorem{theorem}{Theorem}
\newcommand{\bbA}{\mathbb{A}}
\newcommand{\bbN}{\mathbb{N}}
\newcommand{\bbZ}{\mathbb{Z}}
\newcommand{\bone}{\mathbf{1}}
\newcommand{\calD}{\mathcal{D}}
\newcommand{\calF}{\mathcal{F}}
\newcommand{\calO}{\mathcal{O}}
\newcommand{\calP}{\mathcal{P}}
\DeclareMathOperator{\der}{der}
\DeclareMathOperator{\rep}{rep}
\DeclareMathOperator{\gldim}{gldim}
\begin{document}

\begin{abstract}
We complete the derived equivalence classification of the gentle two-cycle algebras initiated in earlier papers by Avella-Alaminos and Bobi\'nski--Malicki.
\end{abstract}

\maketitle

\section*{Introduction and the main result}

Throughout the paper $k$ denotes a fixed algebraically closed field. By $\bbZ$, $\bbN$ and $\bbN_+$ we denote the sets of integers, nonnegative integers and positive integers, respectively. If $i$ and $j$ are integers, then $[i, j]$ denotes the set of integers $l$ such that $i \leq l \leq j$.

For a (finite-dimensional basic connected) algebra $\Lambda$ one considers its (bounded) derived category $\calD^b (\Lambda)$, which has a structure of a triangulated category. Algebras $\Lambda'$ and $\Lambda''$ are said to be derived equivalent if the categories $\calD^b (\Lambda')$ and $\calD^b (\Lambda'')$ are triangle equivalent. A study of derived categories, initiated by papers of Happel~\cites{Happel1987, Happel1988} and motivated by tilting theory, is an important direction of research in representation theory of algebras (see for example~\cites{Geiss, Happel1991, Keller, KoenigZimmermann, Ricard1989a, Ricard1989b}). It is worth to remark, that derived categories appearing in representation theory of algebras have sometimes connections with derived categories studied in algebraic geometry~\cites{Beilinson, GeigleLenzing}. One of the topics studied is derived equivalence classification of algebras (see for example~\cites{Asashiba1999, Bastian, BialkowskiHolmSkowronski, BobinskiBuan, BobinskiGeissSkowronski, BocianHolmSkowronski, Brustle, BuanVatne, Holm}).

Gentle algebras, introduced by Assem and Skowro\'nski~\cite{AssemSkowronski}, form an important class of special biserial algebras~\cite{SkowronskiWaschbusch}. For example, gentle algebras appear in a description of repre\-sentation-infinite standard biserial selfinjective algebras due to Po\-go\-rza\-\l{}y and Skowro\'nski~\cite{PogorzalySkowronski}.

Gentle algebras play also an important role in classification of algebras up to derived equivalence. Firstly, the gentle tree algebras are precisely the algebras derived equivalent to the hereditary algebras of Dynkin type $\bbA$~\cite{AssemHappel}. Secondly, the gentle one-cycle algebras which satisfy the clock condition are precisely the algebras derived equivalent to the hereditary algebras of Euclidean type $\tilde{\bbA}$~\cite{AssemSkowronski}. Finally, the gentle one-cycle algebras which do not satisfy the clock condition are precisely the discrete derived algebras, which are not locally finite~\cite{Vossieck}. The derived equivalence classes of the gentle algebras with at most one-cycle are also known and they are distinguished by the invariant of Avella-Alaminos and Geiss~\cite{AvellaAlaminosGeiss}. One should also note that the class of gentle algebras is closed with respect to the derived equivalence~\cite{SchroerZimmermann}.

Taking above into account it is natural to ask about the derived equivalence classification of the gentle two-cycle algebras. Here a gentle algebra $\Lambda$ is called two-cycle if the number of edges in the Gabriel quiver of $\Lambda$ exceeds by one the number of vertices in this quiver. Before formulating the main result we define some families of gentle two-cycle algebras.

For $p \in \bbN_+$ and $r \in [0, p - 1]$, $\Lambda_0 (p, r)$ is the algebra of the quiver
\[
\xymatrix{
& \bullet \ar[rr]|{\textstyle \cdots} & & \bullet \ar[rd]^{\alpha_1}
\\
\bullet \ar[ru]^{\alpha_p} & & & & \bullet \ar@/_/[llll]_\gamma \ar@/^/[llll]^\beta}
\]
bound by $\alpha_p \beta$, $\alpha_i \alpha_{i + 1}$ for $i \in [1, r]$ and $\gamma \alpha_1$. Moreover, for $p \geq 1$, $\Lambda_0 (p + 1, -1)$ is the algebra of the quiver
\[
\xymatrix{
& \bullet \ar[ld]_{\alpha_1} & & \bullet \ar[ll]|{\textstyle \cdots}
\\
\bullet & & & & \bullet \ar[llll]^\beta \ar[lu]_{\alpha_p} & \bullet \ar@/_/[l]_\gamma \ar@/^/[l]^\delta}
\]
bound by $\alpha_p \gamma$ and $\beta \delta$. Furthermore, for $p_1, p_2 \in \bbN_+$, $p_3, p_4 \in \bbN$, and $r_1 \in [0, p_1 - 1]$, such that $p_2 + p_3 \geq 2$ and $p_4 + r_1 \geq 1$, $\Lambda_1 (p_1, p_2, p_3, p_4, r_1)$ is the algebra of the quiver
\[
\xymatrix{
& \bullet \ar[rr]|{\textstyle \cdots} & & \bullet \ar[rd]^{\alpha_1}
\\
\bullet \ar[ru]^{\alpha_{p_1}} \ar[r]^-{\delta_{p_4}} & \cdots \ar[r]^-{\delta_1} & \bullet & \cdots \ar[l]_-{\gamma_1} & \bullet \ar[l]_-{\gamma_{p_3}}
\ar[ld]^{\beta_{p_2}}
\\
& \bullet \ar[lu]^{\beta_1} & & \bullet \ar[ll]|{\textstyle \cdots}}
\]
bound by $\alpha_i \alpha_{i + 1}$ for $i \in [p_1 - r_1, p_1 - 1 ]$, $\alpha_{p_1} \beta_1$, $\beta_i \beta_{i + 1}$ for $i \in [1, p_2 - 1]$, and $\beta_{p_2} \alpha_1$. Finally, for $p_1, p_2 \in \bbN_+$, $p_3 \in \bbN$, $r_1 \in [0, p_1 - 1]$, and $r_2 \in [0, p_2 - 1]$, such that $p_3 + r_1 + r_2 \geq 1$, $\Lambda_2 (p_1, p_2, p_3, r_1, r_2)$ is the algebra of the quiver
\[
\xymatrix{
\bullet \ar[dd]|{\textstyle \vdots} & & & & & & \bullet \ar[ld]_{\beta_1}
\\
& \bullet \ar[lu]_{\alpha_{p_1}} & \bullet \ar[l]_{\gamma_1} & & \bullet \ar[ll]|{\textstyle \cdots} & \bullet \ar[l]_{\gamma_{p_3}} \ar[rd]_{\beta_{p_2}}
\\
\bullet \ar[ru]_{\alpha_1} & & & & & & \bullet \ar[uu]|{\textstyle \vdots}}
\]
bound by $\alpha_i \alpha_{i + 1}$ for $i \in [p_1 - r_1, p_1 - 1]$, $\alpha_{p_1} \alpha_1$, $\beta_i \beta_{i + 1}$ for $i \in [p_2 - r_2, p_2 - 1]$, and $\beta_{p_2} \beta_1$.

The main aim of this paper is to prove the following theorem.

\begin{theorem} \label{main theorem}
The above defined algebras are representatives of the derived equivalence classes of the gentle two-cycle algebras. More precisely,
\begin{enumerate}

\item
if $\Lambda$ is a gentle two-cycle algebra, then $\Lambda$ is derived equivalent to one of the above defined algebras, and

\item
the above defined algebras are pairwise not derived equivalent.

\end{enumerate}
\end{theorem}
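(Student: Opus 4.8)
\section*{Outline of the proof}

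The two assertions demand different techniques: (1) is a reduction to a normal form realized by explicit derived equivalences, while (2) is a separation by derived invariants. For (1), let $\Lambda$ be a gentle two-cycle algebra with Gabriel quiver $Q$. The underlying graph of $Q$ has first Betti number two, so its $2$-connected ``cyclic core'' is, combinatorially, one of a short list of shapes: two cycles meeting in a single vertex (including the degenerate ``banana'' where one of the cycles is a double edge), two cycles joined by a bridge path, or a theta graph (two vertices joined by three internally disjoint arcs). The first move is to absorb the trees hanging off the core: each pendant subquiver is a gentle algebra of type $\bbA$, and it can be stripped off by a sequence of reflection/APR-type tilts, each a derived equivalence that preserves gentleness and the two-cycle property. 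This reduces the problem to ``core'' algebras, where the only remaining data are the orientations of the arcs and the placement of the zero-relations.

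The core algebras are then treated shape by shape. The tool is the stock of combinatorial derived equivalences of gentle algebras --- the ``good mutations'' that slide a relation along a cycle, or reverse the role of an arc --- each realized by an explicit two-term tilting complex whose endomorphism algebra is again gentle two-cycle. Applying them brings the relation pattern on each cycle into a canonical position; what survives is exactly the data appearing in the statement, namely the arc lengths $p_i$ and the numbers $r_j$ of consecutive relations along a cycle. Matching the resulting normal forms with the quivers of $\Lambda_0$, $\Lambda_1$, $\Lambda_2$ (the theta shapes accounting for $\Lambda_0$ and $\Lambda_1$, the bridged cycles for $\Lambda_2$, the exceptional banana for $\Lambda_0(p+1,-1)$) shows that every gentle two-cycle algebra is derived equivalent to one on the list. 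Those cases already settled in the cited papers of Avella-Alaminos and of Bobi\'nski--Malicki can be invoked directly; the bulk of the new work lies in the theta case and the exceptional wedge case.

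For (2), the central invariant is the Avella-Alaminos--Geiss invariant $\phi_\Lambda\colon \bbN^2 \to \bbN$, which is constant on derived equivalence classes of gentle algebras \cite{AvellaAlaminosGeiss}. I would compute $\phi_\Lambda$ explicitly for each of the three families --- a finite combinatorial reading-off from the quiver with relations --- and verify that $\phi_\Lambda$ together with the rank $n$ of $K_0(\Lambda)$ (the number of vertices of $Q$, also a derived invariant) distinguishes all but finitely many pairs on the list: inside each family the arc lengths and relation counts are recovered from $\phi_\Lambda$ and $n$, and the shape of the support of $\phi_\Lambda$ separates the topological types. For the finitely many residual near-coincidences I would appeal to the further derived invariants $|\det C_\Lambda|$ (the absolute value of the Cartan determinant) and the dichotomy $\gldim \Lambda < \infty$ versus $\gldim \Lambda = \infty$, the latter being, for gentle algebras, governed by whether every cycle of $Q$ carries a relation. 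Since no two of the listed algebras agree on all of these invariants, they are pairwise not derived equivalent, which together with (1) proves the theorem.

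The principal obstacle is part (1): the reduction is an exhaustive analysis over the combinatorial type of the core, the orientations of its arcs, and the distribution of the relations, and every reduction step must be certified by producing (or citing) a tilting complex whose endomorphism ring is again a gentle two-cycle algebra. Taming this case explosion, and checking that the normalization terminates at precisely the stated parameter ranges --- the inequalities $p_2+p_3\ge 2$, $p_4+r_1\ge 1$ and $p_3+r_1+r_2\ge 1$ being exactly the conditions that exclude algebras already accounted for in a simpler form elsewhere on the list --- is where the real effort goes. A secondary difficulty in (2) is making the $\phi_\Lambda$-computations uniform across the families and identifying the handful of pairs on which $\phi_\Lambda$ and $n$ alone are not enough.
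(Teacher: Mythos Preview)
Your outline for part~(1) is broadly what was carried out in \cite{BobinskiMalicki}, and the paper simply quotes that result. The substantive content of the present paper is part~(2), and there your plan has a genuine gap.

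You propose to separate the listed algebras using $\phi_\Lambda$, the rank of $K_0$, $|\det C_\Lambda|$, and the dichotomy $\gldim \Lambda < \infty$ versus $= \infty$, claiming that only ``finitely many residual near-coincidences'' remain. This fails systematically on the family $\Lambda_0$. One computes $\|\phi_{\Lambda_0(p,r)}\| = 1$ for every admissible $(p,r)$ (see \cite{BobinskiMalicki}*{Lemma~3.1}), and in fact the single orbit has invariant $(n,m)$ depending only on $p$, not on $r$: for instance $\Lambda_0(2,0)$ and $\Lambda_0(2,1)$ both have $\phi$ supported at $(2,4)$. All $\Lambda_0(p,r)$ have finite global dimension (Lemma~\ref{lemma gldim}), so that dichotomy is vacuous here, and the Cartan determinant does not separate them either. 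So for each fixed $p$ you have an entire family $\Lambda_0(p,-1), \ldots, \Lambda_0(p,p-1)$ on which your invariants coincide; this is not a finite residual list but the whole problem. Indeed, this is exactly the case left open by \cite{BobinskiMalicki} and \cite{AvellaAlaminos}.

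The paper's argument is of a different nature. It first shows (Lemma~\ref{lemma extension}, via Barot--Lenzing on one-point coextensions together with the transitivity of the autoequivalence group on boundary complexes) that a derived equivalence $\Lambda_0(p,r') \simeq_{\der} \Lambda_0(p,r'')$ propagates to $\Lambda_0(q,r') \simeq_{\der} \Lambda_0(q,r'')$ for all $q \geq p$. It then invokes Amiot's theorem \cite{Amiot}, proved by geometric (surface) methods, which distinguishes $\Lambda_0(q,r')$ from $\Lambda_0(q,r'')$ once $q$ is large relative to $r', r''$. The combination of a propagation step with an asymptotic separating result is the new idea; purely numerical invariants of the kind you list are not sufficient.
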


Parts of Theorem~\ref{main theorem} have been already proved in~\cite{BobinskiMalicki} (see also~\cite{AvellaAlaminos}). More precisely, the following claims have been proved there:
\begin{enumerate}

\item
If $\Lambda$ is a gentle two-cycle algebra, then $\Lambda$ is derived equivalent to an algebra from one of the families $\Lambda_0$, $\Lambda_1$ and $\Lambda_2$.

\item
The algebras from different families are not derived equivalent.

\item
The algebras from family $\Lambda_1$ ($\Lambda_2$) are pairwise not derived equivalent.

\end{enumerate}
Thus in order to prove Theorem~\ref{main theorem}, we have to show the following.

\begin{theorem} \label{main theorem bis}
If $p', p'' \in \bbN_+$, $r' \in [-1, p' - 1]$, $r'' \in [-1, p'' - 1]$, and $(p', r') \neq (1, -1) \neq (p'', r'')$, then the algebras $\Lambda_0 (p', r')$ and $\Lambda_0 (p'', r'')$ are not derived equivalent.
\end{theorem}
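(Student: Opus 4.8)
The plan is to separate the algebras by the Avella-Alaminos and Geiss invariant $\phi$, a derived invariant of gentle algebras.

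First I would fix the parameter $p$: the Gabriel quiver of $\Lambda_0(p,r)$ has $p+1$ vertices in every case --- for $r\in[0,p-1]$ and for the exceptional shape $r=-1$ alike --- and, being the rank of the Grothendieck group, this number is a derived invariant. So it is enough to fix $p\in\bbN_+$ and show that $\Lambda_0(p,r)$ and $\Lambda_0(p,r')$ are not derived equivalent whenever $-1\le r<r'\le p-1$ (for $p=1$ only $r=0$ occurs and there is nothing to prove).

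Then I would compute $\phi_{\Lambda_0(p,r)}$ directly from the quiver with relations, by listing all permitted and all forbidden threads --- trivial ones included --- and running the pairing procedure. The combinatorics is transparent and governed by $r$: as $r$ grows, the relations $\alpha_i\alpha_{i+1}$, $1\le i\le r$, cut the permitted threads into shorter pieces and at the same time fuse $\gamma\alpha_1$ with them into one long forbidden thread, of length $r+2$ (merging further with $\alpha_p\beta$ into a single forbidden thread of length $p+2$ in the boundary case $r=p-1$); and the exceptional shape $r=-1$, whose two parallel arrows sit in a different position, gives yet another thread pattern. Feeding these data into the algorithm yields explicit formulae for $\phi_{\Lambda_0(p,r)}$ that change from one value of $r$ to the next: with $p$ already fixed, one reads $r$ off the length of the longest forbidden thread when $r\ge 0$, and one tells $r=0$ apart from $r=-1$ by the number of non-trivial permitted threads (one, of length $p+2$, against two, of lengths $p$ and $2$). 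For instance, for $p=3$ the permitted/forbidden thread-length multisets are $(\{3,2\},\{2,2\})$, $(\{5\},\{2,2\})$, $(\{3,2\},\{3,2\})$ and $(\{2,1,2\},\{5\})$ for $r=-1,0,1,2$, which are pairwise distinct. Since $\phi$ is a derived invariant, checking that the $\phi_{\Lambda_0(p,r)}$ are pairwise distinct for each fixed $p$ finishes the proof.

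I expect the main obstacle to be the bookkeeping: carrying out the pairing algorithm correctly --- trivial threads included --- dealing with the degenerate case $r=p-1$, and turning the resulting closed description of $\phi_{\Lambda_0(p,r)}$ into a clean statement that these are pairwise distinct. Should any borderline case resist a clean computation of $\phi$, there is a homological backstop: all the algebras $\Lambda_0(p,r)$ have finite global dimension --- a gentle algebra has finite global dimension precisely when no oriented cycle of its quiver has all its cyclically consecutive pairs of arrows composing to zero, and no such cycle occurs here --- so $\calD^b(\Lambda_0(p,r))$ admits a Serre functor and the Euler form on its Grothendieck group is a derived invariant; the rank of the symmetrization of this form, which equals the rank of $C+C^{\mathrm T}$ for $C$ the Cartan matrix, is $p$ for $\Lambda_0(p,0)$ but $p-1$ for $\Lambda_0(p,-1)$, so these two algebras are not derived equivalent.
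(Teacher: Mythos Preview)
Your plan has a fundamental gap: the Avella--Alaminos--Geiss invariant $\phi$ does \emph{not} distinguish the algebras $\Lambda_0(p,r)$ for fixed $p$ and varying $r$. Indeed, one has $\|\phi_{\Lambda_0(p,r)}\|=1$ for every admissible $(p,r)$ (this is \cite{BobinskiMalicki}*{Lemma~3.1}, and is used in the paper in the proof of Lemma~\ref{lemma extension}). When there is a single $\Phi$-orbit, the invariant collapses to the indicator of the pair $(|\calF|,\sum_{\omega\in\calF}\ell(\omega))$, and for a gentle algebra of finite global dimension the second entry is just the number of arrows. Both entries therefore depend only on the underlying quiver, i.e.\ only on $p$. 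Concretely, for $p=2$ one computes $\phi_{\Lambda_0(2,-1)}=\phi_{\Lambda_0(2,0)}=\phi_{\Lambda_0(2,1)}$, each the indicator of $(2,4)$; for $p=3$ all four invariants are the indicator of $(3,5)$. Your $p=3$ ``example'' in fact illustrates the misconception: you list multisets of individual thread lengths, but $\phi$ never records those---it records only, orbit by orbit, the number of forbidden threads and their total length. The multisets you wrote are also incomplete (trivial threads and singleton antipaths such as $\alpha_2$ are missing), but even the correct multisets are not what $\phi$ sees. This failure of $\phi$ is precisely why the $\Lambda_0$ family remained open after \cite{AvellaAlaminos} and \cite{BobinskiMalicki}.

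The paper's route is entirely different. It shows (Lemma~\ref{lemma extension} and Corollary~\ref{corollary bigger}) that any derived equivalence $\Lambda_0(p,r')\simeq_{\der}\Lambda_0(p,r'')$ can be propagated to $\Lambda_0(q,r')\simeq_{\der}\Lambda_0(q,r'')$ for all $q\ge p$, by realizing $\Lambda_0(p+1,r)$ as a one-point coextension of $\Lambda_0(p,r)$ by a boundary complex and invoking Barot--Lenzing; the single-orbit property $\|\phi\|=1$ is used here precisely to guarantee that all boundary complexes lie in one autoequivalence orbit. One then chooses $q$ large enough that $r',r''\le q/2-1$ and applies Amiot's result (Proposition~\ref{proposition Amiot}), which uses surface models rather than $\phi$. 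Your Euler-form backstop for $r=-1$ versus $r=0$ looks correct, but it does not touch the heart of the problem, namely separating $\Lambda_0(p,r')$ from $\Lambda_0(p,r'')$ for $0\le r'<r''\le p-1$.
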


We note that one could replace derived equivalence by tilting-co\-tilting equivalence (see for example~\cite{AssemSkowronski}) in Theorems~\ref{main theorem} and~\ref{main theorem bis}. Indeed, obviously if algebras are not derived equivalent, then they are not tilting-cotilting equivalent. On the other hand, every derived equivalence obtained in~\cite{BobinskiMalicki} is realized via a tilting-cotilting.

The paper consists of two sections. In Section~\ref{section preliminaries} we recall necessary tools, including the invariant of Avella-Alaminos and Geiss, Auslander--Reiten quivers, Brenner--Butler reflections and behavior of derived equivalence under one-point coextensions. Next in Section~\ref{section proof} we prove Theorem~\ref{main theorem bis}. In the paper we use a formalism of bound quivers introduced by Gabriel~\cite{Gabriel}. For related background see for example~\cite{AssemSimsonSkowronski}.

One should remark that a partial version of Theorem~\ref{main theorem bis} has been obtained independently by Amiot~\cite{Amiot} and Kalck~\cite{Kalck}. Moreover, Amiot's result plays an important role in the proof.

\section{Preliminaries} \label{section preliminaries}

\subsection{Quivers and their representations}

By a quiver $\Delta$ we mean a set $\Delta_0$ of vertices and a set $\Delta_1$ of arrows together with two maps $s = s_\Delta, t = t_\Delta \colon \Delta_1 \to \Delta_0$ which assign to $\alpha \in \Delta_1$ the starting vertex $s \alpha$ and the terminating vertex $t \alpha$, respectively. We assume that all considered quivers $\Delta$ are locally finite, i.e.\ for each $x \in \Delta_0$ there is only a finite number of $\alpha \in \Delta_1$ such that either $s \alpha = x$ or $t \alpha = x$. A quiver $\Delta$ is called finite if $\Delta_0$ (and, consequently, also $\Delta_1$) is a finite set. For technical reasons we assume that all considered quivers $\Delta$ have no isolated vertices, i.e.\ there is no $x \in \Delta_0$ such that $s \alpha \neq x \neq t \alpha$ for each $\alpha \in \Delta_1$.

Let $\Delta$ be a quiver. If $l \in \bbN_+$, then by a path in $\Delta$ of length $l$ we mean every sequence $\sigma = \alpha_1 \cdots \alpha_l$ such that $\alpha_i \in \Delta_1$ for each $i \in [1, l]$ and $s \alpha_i = t \alpha_{i + 1}$ for each $i \in [1, l - 1]$. In the above situation we put $s \sigma := s \alpha_l$ and $t \sigma := t \alpha_1$. Moreover, we call $\alpha_1$ and $\alpha_l$ the terminating and the starting arrow of $\sigma$, respectively. Observe that each $\alpha \in \Delta$ is a path in $\Delta$ of length $1$. Moreover, for each $x \in \Delta_0$ we introduce the path $\bone_x$ in $\Delta$ of length $0$ such that $s \bone_x := x =: t \bone_x$. We denote the length of a path $\sigma$ in $\Delta$ by $\ell (\sigma)$. If $\sigma'$ and $\sigma''$ are two paths in $\Delta$ such that $s \sigma' = t \sigma''$, then we define the composition $\sigma' \sigma''$ of $\sigma'$ and $\sigma''$, which is a path in $\Delta$ of length $\ell (\sigma') + \ell (\sigma'')$, in the obvious way (in particular, $\sigma \bone_{s \sigma} = \sigma = \bone_{t \sigma} \sigma$ for each path $\sigma$). A path $\sigma_0$ is called a subpath of a path $\sigma$, if there exist paths $\sigma'$ and $\sigma''$ such that $\sigma = \sigma' \sigma_0 \sigma''$.

By a (monomial) bound quiver we mean a pair $\Lambda = (\Delta, R)$ consisting of a finite quiver $\Delta$ and a set $R$ of paths in $\Delta$, such that:
\begin{enumerate}

\item
$\ell (\rho) > 1$ for each $\rho \in R$, and

\item
there exists $n \in \bbN_+$ such that every path $\sigma$ in $\Delta$ with $\ell (\sigma) = n$ has a subpath which belongs to $R$.

\end{enumerate}
If $\Lambda = (\Delta, R)$ is a bound quiver, then by a path in $\Lambda$ we mean a path in $\Delta$ which does not have a subpath from $R$. A path $\sigma$ in $\Lambda$ is said to be maximal in $\Lambda$ if $\sigma$ is not a subpath of a longer path in $\Lambda$. The lack of isolated vertices in $\Delta$ implies that $\ell (\sigma) > 0$ for each maximal path $\sigma$ in $\Lambda$.

By a representation $V$ of a bound quiver $\Lambda = (\Delta, R)$ we mean a collection of finite-dimensional vector spaces $V_x$, $x \in \Delta_0$, and linear maps $V_\alpha \colon V_{s \alpha} \to V_{t \alpha}$, $\alpha \in \Delta_1$, such that the induced map $V_\rho \colon V_{s \rho} \to V_{t \rho}$ is zero for every $\rho \in R$. If $V$ and $W$ are representations, then a homomorphism $f \colon V \to W$ is a collection of linear maps $f_x \colon V_x \to W_x$ such that $f_{t \alpha} V_\alpha = W_\alpha f_{s \alpha}$ for every arrow $\alpha$ in $\Delta$. The category $\rep \Lambda$ of representations of $\Lambda$ is an abelian category. We call bound quivers $\Lambda'$ and $\Lambda''$ derived equivalent (and write $\Lambda' \simeq_{\der} \Lambda''$), if the derived categories $\calD^b (\rep \Lambda')$ and $\calD^b (\rep \Lambda'')$ are triangle equivalent. We will usually write shortly $\calD^b (\Lambda)$ instead of $\calD^b (\rep \Lambda)$ if $\Lambda$ is a bound quiver.

A connected bound quiver $\Lambda = (\Delta, R)$ is called gentle if the following conditions are satisfied:
\begin{enumerate}

\item
$R$ consists of paths of length $2$,

\item
for each $x \in \Delta_0$ there are at most two $\alpha \in \Delta_1$ such that $s \alpha = x$ and at most two $\alpha \in \Delta_1$ such that $t \alpha = x$,

\item
for each $\alpha \in \Delta_1$ there is at most one $\alpha' \in \Delta_1$ such that $s \alpha' = t \alpha$ and $\alpha' \alpha \not \in R$, and at most one $\alpha' \in \Delta_1$ such that $t
\alpha' = s \alpha$ and $\alpha \alpha' \not \in R$,

\item
for each $\alpha \in \Delta_1$ there is at most one $\alpha' \in \Delta_1$ such that ($s \alpha' = t \alpha$ and) $\alpha' \alpha \in R$, and at most one $\alpha' \in \Delta_1$ such ($t \alpha' = s \alpha$ and) $\alpha \alpha' \in R$.

\end{enumerate}

Let $\Lambda = (\Delta, R)$ be a gentle bound quiver. Note that a path $\alpha_1 \ldots \alpha_l$ in $\Delta$ is a path in $\Lambda$ if and only if $\alpha_i \alpha_{i + 1} \not \in R$ for all $i \in [1, l- 1]$. Taking this into account, we call a path $\alpha_1 \ldots \alpha_l$ in $\Delta$ an antipath in $\Lambda$ if $\alpha_i \alpha_{i + 1} \in R$ for all $i \in [1, l - 1]$. Again we call an anitpath $\omega$ maximal if $\omega$ is not a subpath of a longer anitpath in $\Lambda$.

\subsection{The invariant of Avella-Alaminos and Geiss} \label{subsection AAG}

Throughout this subsection $\Lambda = (\Delta, R)$ is a fixed gentle bound quiver.

By a permitted thread in $\Lambda$ we mean either a maximal path in $\Lambda$ or $\bone_x$, for $x \in \Delta_0$, such that there is at most one arrow $\alpha$ with $s \alpha = x$, there is at most one arrow $\beta$ with $t \beta = x$, and if such $\alpha$ and $\beta$ exist then $\alpha \beta \not \in R$. Similarly, by a forbidden thread we mean either a maximal antipath in $\Lambda$ or $\bone_x$, for $x \in \Delta_0$, such that there is at most one arrow $\alpha$ with $s \alpha = x$, there is at most one arrow $\beta$ with $t \beta = x$, and if such $\alpha$ and $\beta$ exist then $\alpha \beta \in R$.

Denote by $\calP$ and $\calF$ the sets of the permitted and forbidden threads in $\Lambda$, respectively. We define bijections $\Phi_1 \colon \calP \to \calF$ and $\Phi_2 \colon \calF \to \calP$. First, if $\sigma$ is a maximal path in $\Lambda$, then we put $\Phi_1 (\sigma) := \omega$, where $\omega$ is the unique forbidden thread such that $t \omega = t \sigma$ and either $\ell (\omega) = 0$ or $\ell (\omega) > 0$ and the terminating arrows of $\sigma$ and $\omega$ differ. If $\bone_x$, for $x \in \Delta_0$, is a permitted thread, there are two cases to consider. If there is an arrow $\beta$ such that $t \beta = x$ (note that such $\beta$ is uniquely determined), then $\Phi_1 (\bone_x)$ is the (unique) forbidden thread whose terminating arrow is $\beta$. Otherwise we put $\Phi_1 (\bone_x) := \bone_x$. We define $\Phi_2$ dually. Namely, if $\omega$ is a maximal anitpath, then $\Phi_2 (\omega) := \sigma$, where $\sigma$ is the permitted thread such that $s \sigma = s \omega$ and either $\ell (\sigma) = 0$ or $\ell (\sigma) > 0$ and the starting arrows of $\omega$ and $\sigma$ differ. Now, let $x \in \Delta_0$ and $\bone_x$ be a forbidden thread. If there is $\alpha \in \Delta_1$ such that $s \alpha = x$, then $\Phi_2$ is the permitted thread with starting arrow $\alpha$. Otherwise, $\Phi_2 (\bone_x) := \bone_x$. Finally, we put $\Phi := \Phi_1 \Phi_2 \colon \calF \to \calF$.

Let $\calF'$ be the set arrows in $\Delta$ which are not subpaths of any maximal antipath in $\Lambda$ (i.e.\ every antipath containing $\alpha$ can be extended to a longer antipath). For every $\alpha \in \calF'$ there exists uniquely determined $\alpha' \in \calF'$ such that $\alpha \alpha' \in R$. We put $\Phi' (\alpha) := \alpha'$. In this way we get a bijection $\Phi' \colon \calF' \to \calF'$.

Let $\calF / \Phi$ be the sets of orbits in $\calF$ with respect to the action of $\Phi$. For each $\calO \in \calF / \Phi$ we put $n_\calO := |\calO|$ and $m_\calO := \sum_{\omega \in \calO} \ell (\omega)$. Similarly, if $\calO \in \calF' / \Phi'$, then $n_\calO := 0$ and $m_\calO := |\calO|$.  Then we define $\phi_\Lambda \colon \bbN^2 \to \bbN$ by the formula:
\[
\phi_\Lambda (n, m) := | \{ \calO \in \calF / \Phi \cup \calF' / \Phi' : \text{$(n_\calO, m_\calO) = (n, m)$} \}| \qquad (n, m \in \bbN).
\]
Avella-Alaminos and Geiss has proved~\cite{AvellaAlaminosGeiss} that $\phi_\Lambda$ is a derived invariant, i.e.\ if $\Lambda'$ and $\Lambda''$ are derived equivalent gentle bound quivers, then $\phi_{\Lambda'} = \phi_{\Lambda''}$.

For a function $\phi \colon \bbN^2 \to \bbN$ we put $\| \phi \| := \sum_{(n, m) \in \bbN^2} \phi (n, m)$. If $\Lambda$ is a gentle bound quiver, then $\| \phi_\Lambda \|$ equals $|\calF / \Phi| + |\calF' / \Phi'|$. We will need the following observation.

\begin{lemma} \label{lemma gldim}
Let $\Lambda$ be a gentle bound quiver. If $\| \phi_\Lambda \| = 1$, then $\gldim \Lambda < \infty$.
\end{lemma}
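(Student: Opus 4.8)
The plan is to turn the numerical hypothesis into the combinatorial statement that $\Lambda$ has no cyclic antipath, and then to read off finite global dimension from the minimal projective resolutions of the simples.

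First I would pin down $|\calF/\Phi|$. Since $\Delta$ is finite, has no isolated vertices and is bounded, it contains an arrow, and every path in $\Lambda$ has length bounded by the integer $n$ from the bound‑quiver condition; hence any arrow extends to a maximal path in $\Lambda$, so $\calP \neq \emptyset$. As $\Phi_1 \colon \calP \to \calF$ is a bijection, $\calF \neq \emptyset$, and as $\Phi$ is a permutation of the finite set $\calF$ it has at least one orbit, so $|\calF/\Phi| \geq 1$. Combined with $\|\phi_\Lambda\| = |\calF/\Phi| + |\calF'/\Phi'|$, the assumption $\|\phi_\Lambda\| = 1$ forces $|\calF'/\Phi'| = 0$, i.e.\ $\calF' = \emptyset$. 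Thus every arrow of $\Delta$ lies on some maximal antipath in $\Lambda$. Using the gentleness conditions (each arrow admits at most one prolongation of an antipath on each side), an arrow lying on arbitrarily long antipaths would lie on a cyclic antipath; so $\calF' = \emptyset$ is equivalent to $\Lambda$ having no cyclic antipath, and in that case every maximal antipath has finite length with pairwise distinct arrows.

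Next I would compute $\Omega$ on simples. For a vertex $x$ one has $\Omega S_x = \operatorname{rad} P_x = \bigoplus_{\alpha \colon s\alpha = x} M_\alpha$, where $M_\alpha$ is the submodule of $P_x$ generated by $\alpha$ (with top $S_{t\alpha}$), since every path of positive length out of $x$ has a unique starting arrow. Computing the kernel of the projective cover $P_{t\alpha} \to M_\alpha$ and using that $\Lambda$ is monomial and gentle, one finds that either there is no arrow $\alpha'$ with $\alpha'\alpha \in R$, so that $M_\alpha \cong P_{t\alpha}$ is projective, or there is a unique such $\alpha'$, so that $\Omega M_\alpha \cong M_{\alpha'}$. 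Iterating, $\Omega^i M_\alpha \cong M_{\alpha_i}$ along the chain $\alpha = \alpha_0, \alpha_1, \alpha_2, \dots$ with $\alpha_{i+1}\alpha_i \in R$; but $\alpha_l \cdots \alpha_1\alpha_0$ is an antipath through $\alpha$ for each $l$, so by $\calF' = \emptyset$ the chain terminates at some $\alpha_l$ with $M_{\alpha_l}$ projective. Hence $\operatorname{pd} M_\alpha < \infty$ for every arrow $\alpha$, so $\operatorname{pd} S_x < \infty$ for every $x$, and therefore $\gldim \Lambda = \max_{x \in \Delta_0} \operatorname{pd} S_x < \infty$.

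The bookkeeping I would still need to carry out is the direct‑sum decomposition of $\operatorname{rad} P_x$, the kernel computation for $P_{t\alpha} \to M_\alpha$, and the remark that a maximal antipath in a gentle bound quiver is automatically finite (otherwise it would be a subpath of $\omega\omega$ for $\omega$ periodic, hence not maximal). The only delicate point is the equivalence of ``$\calF' = \emptyset$'' with ``no cyclic antipath'': this rests entirely on the two gentleness conditions guaranteeing uniqueness of one‑sided prolongations of antipaths, so that an arrow on an infinite antipath in fact lies on a cyclic one. Everything else is routine, so I do not expect a serious obstacle here; alternatively, the implication ``no cyclic antipath $\Rightarrow \gldim \Lambda < \infty$'' for gentle algebras could simply be invoked from the literature.
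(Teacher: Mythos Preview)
Your proof is correct and follows the same route as the paper: first deduce $\calF' = \emptyset$ from $\|\phi_\Lambda\| = 1$ using $\calF \neq \emptyset$, then argue that $\calF' = \emptyset$ forces $\gldim \Lambda < \infty$. The paper simply asserts the second step as ``easily seen,'' whereas you spell it out via the standard syzygy computation along antipaths; so you are not doing anything different, just filling in details the paper omits.
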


\begin{proof}
If $\| \phi_\Lambda \| = 1$, then $\calF' = \emptyset$ (since $\calF \neq \emptyset$). The conditions $\calF' = \emptyset$ and $\gldim \Lambda < \infty$ are easily seen to be equivalent.
\end{proof}

\subsection{Boundary complexes}

Again let $\Lambda = (\Delta, R)$ be a gentle bound quiver. One defines the Auslander--Reiten quiver $\Gamma (\calD^b (\Lambda))$ of $\calD^b (\Lambda)$ in the following way: the vertices of $\Gamma (\calD^b (\Lambda))$ are (representatives of) the isomorphism classes of the indecomposable complexes in $\calD^b (\Lambda)$ and the number of arrows between vertices $X$ and $Y$ equals the dimension of the space of irreducible maps between $X$ and $Y$.

An indecomposable complex $X \in \calD^b (\Lambda)$ is called boundary if $X$ is perfect (i.e.\ quasi-isomorphic to a bounded complex of projective representations) and there is only one arrow in $\Gamma (\calD^b (\Lambda))$ terminating at $X$. Equivalently, one may say that in the Auslander--Reiten triangle (see~\cite{Happel1991}) terminating at $X$ the middle term is indecomposable. The invariant of Avella-Alaminos and Geiss describes the action of $\Sigma$ on the components of $\Gamma (\calD^b (\Lambda))$ containing boundary complexes. Consequently, we have the following observation.

\begin{lemma} \label{lemma boundary1}
Let $\Lambda$ be a gentle bound quiver such that $\| \phi_{\Lambda} \| = 1$. If $X$ and $Y$ are boundary complexes in $X \in \calD^b (\Lambda)$, then there exists an autoequivalence $F$ of $\calD^b (\Lambda)$ such that $F X = Y$.
\end{lemma}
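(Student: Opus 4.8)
The plan is to realise the desired autoequivalence $F$ as a composite of a power of the shift functor and a power of the Auslander--Reiten translation. First, Lemma~\ref{lemma gldim} applies: $\gldim \Lambda < \infty$, so every object of $\calD^b(\Lambda)$ is perfect and $\calD^b(\Lambda)$ is a Hom-finite Krull--Schmidt triangulated category admitting a Serre functor, namely the derived Nakayama functor $\nu = - \otimes^{\mathbf{L}}_\Lambda D\Lambda$. Consequently $\calD^b(\Lambda)$ has Auslander--Reiten triangles (see~\cite{Happel1991}) and the Auslander--Reiten translation $\tau = \nu \circ \Sigma^{-1}$ is an autoequivalence; thus $\Sigma$, $\tau$ and all their composites are available as autoequivalences of $\calD^b(\Lambda)$.

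Next I would feed in the structural information behind the remark preceding the lemma, i.e.\ the description from~\cite{AvellaAlaminosGeiss} of $\Gamma(\calD^b(\Lambda))$ and of the $\Sigma$-action on it. The two facts I need are: (i) every connected component of $\Gamma(\calD^b(\Lambda))$ that contains a boundary complex is of type $\bbZ A_\infty$, and within such a component the boundary complexes form a single $\tau$-orbit (an object of a $\bbZ A_\infty$-component has only one arrow terminating at it exactly when it lies on the boundary $A_\infty$-subquiver, and that boundary is one $\tau$-orbit); and (ii) $\Sigma$ permutes the components that contain boundary complexes, and the number of $\Sigma$-orbits among them is recorded by $\phi_\Lambda$ --- so, since $\| \phi_\Lambda \| = 1$ (whence also $\calF' = \emptyset$), there is exactly one such orbit and $\Sigma$ acts transitively on the set of components of $\Gamma(\calD^b(\Lambda))$ that contain boundary complexes.

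Now take boundary complexes $X$ and $Y$, and let $C_X$, $C_Y$ be the components containing them. By (ii) there is $a \in \bbZ$ with $\Sigma^a C_X = C_Y$; since $\Sigma^a$ is an autoequivalence it carries boundary complexes to boundary complexes, so $\Sigma^a X$ is a boundary complex in $C_Y$. By (i), $C_Y$ is a $\bbZ A_\infty$-component whose boundary complexes form a single $\tau$-orbit, so $\tau^b (\Sigma^a X) = Y$ for some $b \in \bbZ$. Then $F := \tau^b \circ \Sigma^a$ is an autoequivalence of $\calD^b(\Lambda)$ with $F X = Y$.

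The main obstacle is the middle step: isolating from~\cite{AvellaAlaminosGeiss} precisely facts (i) and (ii) in the form used here. Fact (i) amounts to identifying the shape of the relevant components (together with the elementary remark, already recorded in the text, relating ``only one incoming arrow'' to indecomposability of the middle term of the Auslander--Reiten triangle). Fact (ii) is the assertion that $\phi_\Lambda$ governs the $\Sigma$-action on boundary-containing components, specialised to a single orbit; the only real care is in matching conventions for that correspondence and in using $\calF' = \emptyset$ to see that no additional components enter the count when $\| \phi_\Lambda \| = 1$.
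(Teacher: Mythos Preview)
Your approach is the same as the paper's: move $X$ into the component of $Y$ by a power of $\Sigma$, then reach $Y$ by a power of $\tau$, using Lemma~\ref{lemma gldim} to ensure $\tau$ is an autoequivalence. The paper, however, makes two adjustments that your fact~(i) misses. First, the relevant component need not be of type $\bbZ\bbA_\infty$: the paper allows it to be a tube as well. This is harmless for your argument, since the mouth of a tube is again a single $\tau$-orbit, but your statement of~(i) should be relaxed accordingly. Second, and more substantively, the paper separates out the case where $\Lambda$ is derived equivalent to a hereditary algebra of Dynkin type~$\bbA$. There $\Gamma(\calD^b(\Lambda))$ is a single component of shape $\bbZ\bbA_n$, which has \emph{two} $\tau$-orbits of boundary objects; so your~(i) fails outright, and after applying $\Sigma^a$ (which does nothing at the level of components here) a further $\tau^b$ need not take $X$ to $Y$. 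The paper disposes of this case as ``well-known'' (indeed $\Sigma$ acts as a glide reflection on $\bbZ\bbA_n$ and exchanges the two boundary $\tau$-orbits, so a suitable $\tau^b\Sigma^a$ still works). With that case distinction added and~(i) broadened to include tubes, your argument coincides with the paper's.
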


\begin{proof}
If $\Lambda$ is derived equivalent to a hereditary algebry of Dynkin type $\bbA$, then the claim is well-known. Otherwise, it follows from the interpretation of $\phi_{\Lambda}$ in~\cite{AvellaAlaminosGeiss}*{Sections~5 and~6}, that there exists $p \in \bbN$ such that $\Sigma^p X$ and $Y$ belong to the same component of $\Gamma (\calD^b (\Lambda))$. This component is either a tube or of the form $\bbZ \bbA_\infty$, hence there exists $q \in \bbZ$ such that $\tau^q \Sigma^p X = Y$, where $\tau$ is the Auslander--Reiten translation. Since $\gldim \Lambda < \infty$ by Lemma~\ref{lemma gldim}, $\tau$ is an autoequivalence of $\calD^b (\Lambda)$ and the claim follows.
\end{proof}

If $\sigma$ is a path in $\Lambda$, then we have the corresponding (string) representation $M (\sigma)$ (see for example~\cite{ButlerRingel}). We have the following observation.

\begin{lemma} \label{lemma boundary2}
Let $\Lambda$ be a gentle bound quiver. If $\sigma$ is a maximal path in $\Lambda$, then $M (\sigma)$ \textup{(}viewed as a complex concentrated in degree $0$\textup{)} is a boundary complex in $\calD^b (\Lambda)$.
\end{lemma}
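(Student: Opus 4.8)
The plan is to verify the two conditions defining a boundary complex for $X := M(\sigma)$, viewed in degree $0$: that $X$ is perfect, and that exactly one arrow of $\Gamma(\calD^b(\Lambda))$ ends at $X$ — equivalently, by the remark after the definition, that the middle term of the Auslander--Reiten triangle ending at $X$ is indecomposable. Write $\sigma = \alpha_1 \cdots \alpha_l$ with $l \geq 1$. Then $M(\sigma)$ is the uniserial representation with top $S_{s\sigma}$ and socle $S_{t\sigma}$, and it is a homomorphic image of the projective cover $P_{s\sigma}$. I will use that $\operatorname{rad} P_x = \bigoplus_{\beta\colon s\beta = x} N_\beta$, where $N_\beta$ is the (indecomposable string) submodule of $P_x$ spanned by the classes of the paths in $\Lambda$ starting at $x$ whose starting arrow is $\beta$; by the gentleness conditions $N_\beta \cong M(\mu_\beta)$, where $\mu_\beta$ is obtained from the unique \emph{longest} path in $\Lambda$ with starting arrow $\beta$ by deleting that arrow.

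\emph{Perfectness.} Since $\sigma$ is maximal it cannot be prolonged beyond its starting arrow $\alpha_l$, so $\sigma$ is the longest path with starting arrow $\alpha_l$; hence $N_{\alpha_l} \cong \operatorname{rad} M(\sigma)$, and the projective cover $P_{s\sigma} \twoheadrightarrow M(\sigma)$ has kernel $\bigoplus_{\beta \neq \alpha_l} N_\beta$. As $s\sigma$ carries at most two outgoing arrows, either $\alpha_l$ is the only one — then $M(\sigma) = P_{s\sigma}$ and $X$ is trivially perfect — or there is a second one $\rho^{(0)}$, and $\Omega M(\sigma) = N_{\rho^{(0)}}$ is again the string module of a path. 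Iterating, put $M^{(0)} = M(\sigma)$, $M^{(j)} = N_{\rho^{(j-1)}}$; the minimal projective resolution continues past step $j$ exactly when $t\rho^{(j-1)}$ carries an outgoing arrow $\beta$ with $\beta\rho^{(j-1)} \in R$, and then $\Omega M^{(j)} = N_{\rho^{(j)}}$ for the unique such arrow $\rho^{(j)}$ (uniqueness is condition~(4) in the definition of a gentle bound quiver). Thus $\operatorname{pd}_\Lambda M(\sigma) < \infty$ unless the sequence $\rho^{(0)}, \rho^{(1)}, \dots$ is infinite. But $\rho^{(j-1)} \mapsto \rho^{(j)}$ is injective: two distinct arrows $a \neq b$ with $ta = tb$ mapping to the same $\rho'$ would satisfy $\rho' a, \rho' b \in R$, contradicting condition~(4). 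Since $\Lambda$ has finitely many arrows, an infinite such sequence is purely periodic, so $\rho^{(N)} = \rho^{(0)}$ for some $N \geq 1$; then $t\rho^{(N-1)} = s\rho^{(0)} = s\sigma$ and $\rho^{(0)}\rho^{(N-1)} \in R$. Because $\sigma$ is maximal it cannot be prolonged beyond $s\sigma$ either, so every arrow $\gamma$ with $t\gamma = s\sigma$ satisfies $\alpha_l\gamma \in R$; in particular $\alpha_l\rho^{(N-1)} \in R$. Now $\alpha_l$ and $\rho^{(0)}$ are two distinct arrows with source $s\sigma = t\rho^{(N-1)}$ whose compositions with $\rho^{(N-1)}$ both lie in $R$, again contradicting condition~(4). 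Hence the resolution is finite and $X$ is perfect.

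\emph{The unique arrow.} Since $X$ is perfect, the Auslander--Reiten triangle ending at $X$ exists in $\calD^b(\Lambda)$. Each connected component of $\Gamma(\calD^b(\Lambda))$ for a gentle bound quiver is of the form $\bbZ\bbA_\infty$, $\bbZ\bbA_\infty^\infty$, or a tube; in the first and third types precisely the mouth vertices receive a single arrow, while in the second type every vertex does. So it is enough to show that $X$ lies in a $\bbZ\bbA_\infty^\infty$-component or at the mouth of its component, i.e.\ that there is exactly one irreducible morphism ending at $X$. I would obtain this by computing the irreducible morphisms ending at $X$ in the homotopy-string model of $\calD^b(\Lambda)$: $X$ corresponds to the string-type homotopy string associated with $\sigma$, placed in degree $0$, and an irreducible morphism ending at $X$ comes from an elementary enlargement of this homotopy string at one of its two ends. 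Exactly one end admits such an enlargement — the end at the socle vertex $t\sigma$ always does, accounting for the expected arrow, whereas at the end coming from $s\sigma$ the maximality of $\sigma$ blocks every candidate enlargement, for exactly the reasons used in the perfectness argument (the relevant continuations are forced into $R$ by maximality together with the gentleness conditions). When $\operatorname{pd}_\Lambda M(\sigma) \leq 1$ one can argue more directly: a short computation using maximality of $\sigma$ shows $\operatorname{Hom}_\Lambda(M(\sigma), \Lambda) = 0$, so $\tau_{\calD}X = (\tau M(\sigma))[0]$ and the Auslander--Reiten triangle ending at $X$ is the image in $\calD^b(\Lambda)$ of the Auslander--Reiten sequence $0 \to \tau M(\sigma) \to E \to M(\sigma) \to 0$ in $\mod\Lambda$; Butler--Ringel's description~\cite{ButlerRingel} of that sequence then shows $E$ is indecomposable, maximality of $\sigma$ being what suppresses one of its two potential summands.

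I expect the main obstacle to be the ``unique arrow'' statement, and within it the passage from module-theoretic Auslander--Reiten data to $\calD^b(\Lambda)$: it really can happen that $\operatorname{pd}_\Lambda M(\sigma) \geq 2$, in which case the Auslander--Reiten triangle ending at $M(\sigma)[0]$ is not the image of a short exact sequence of modules, so one is forced to argue inside $\calD^b(\Lambda)$ using the classification of its indecomposables and irreducible maps. The perfectness part, by contrast, is the self-contained syzygy computation above; and the fact that $M(\sigma)[0]$ sits at the mouth can also be read off from the interpretation of the invariant $\phi_\Lambda$ given in~\cite{AvellaAlaminosGeiss}.
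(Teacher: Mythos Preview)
Your approach matches the paper's. Your explicit syzygy computation for perfectness is precisely the statement that the minimal projective resolution of $M(\sigma)$ is encoded by the forbidden thread $\Phi_2^{-1}(\sigma)$ (your sequence $\rho^{(0)},\rho^{(1)},\dots$ is exactly that antipath), which the paper invokes in one line by citing~\cite{Bobinski}; and for the ``unique arrow'' part, both you and the paper defer to the known description of Auslander--Reiten triangles for perfect complexes over gentle algebras (the paper cites~\cite{Bobinski} and, alternatively, the Happel functor together with~\cites{ButlerRingel, SkowronskiWaschbusch}, and explicitly leaves the computation to the reader). Your homotopy-string sketch is the right picture, though note that the component shapes you list and the claim $\operatorname{Hom}_\Lambda(M(\sigma),\Lambda)=0$ are not proved in your write-up and would themselves need the same references; so in the end you are, like the paper, pointing to an external computation rather than carrying it out.
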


\begin{proof}
In the terminology of~\cite{Bobinski} (see also~\cite{BekkertMerklen}) a projective presentation of $M (\sigma)$ is given by the complex which corresponds the anitpath $\Phi_2^{-1} (\sigma)$. In particular, this implies that $M (\sigma)$ is a perfect complex in $\calD^b (\Lambda)$. Moreover, if one uses results of~\cite{Bobinski} in order to calculate the Auslander--Reiten triangle terminating at $M (\sigma)$, then one gets that its middle term is indecomposable. Alternatively, one may use the Happel functor~\cites{Happel1987, Happel1988} and well-known formulas (see for example~\cites{ButlerRingel, SkowronskiWaschbusch}) for calculating the Auslander--Reiten triangles in the stable category of the category of representations of the repetitive category $\hat{\Lambda}$ of $\Lambda$. We leave details to the reader.
\end{proof}

We formulate the following consequence.

\begin{corollary} \label{corollary boundary}
Let $\Lambda'$ and $\Lambda''$ be derived equivalent gentle bound quivers such that $\| \phi_{\Lambda'} \| = 1 = \| \phi_{\Lambda''} \|$. If $\sigma'$ and $\sigma''$ are maximal paths in $\Lambda'$ and $\Lambda''$, respectively, then there exists a derived equivalence $F \colon \calD^b (\Lambda') \to \calD^b (\Lambda'')$ such that $F (M (\sigma')) = M (\sigma'')$.
\end{corollary}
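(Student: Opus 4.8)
The plan is to chain together the three preceding lemmas, with no new computation required. First I would fix a triangle equivalence $G \colon \calD^b (\Lambda') \to \calD^b (\Lambda'')$, which exists by hypothesis. By Lemma~\ref{lemma boundary2}, $M (\sigma')$ is a boundary complex in $\calD^b (\Lambda')$. Since $G$ is a triangle equivalence, it preserves indecomposability, it restricts to an equivalence between the subcategories of perfect complexes, and it induces an isomorphism of Auslander--Reiten quivers; hence $G (M (\sigma'))$ is again a boundary complex, now in $\calD^b (\Lambda'')$.

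Next, Lemma~\ref{lemma boundary2} applied to $\Lambda''$ shows that $M (\sigma'')$ is also a boundary complex in $\calD^b (\Lambda'')$. Since $\| \phi_{\Lambda''} \| = 1$, I would invoke Lemma~\ref{lemma boundary1} for the two boundary complexes $G (M (\sigma'))$ and $M (\sigma'')$ of $\calD^b (\Lambda'')$, obtaining an autoequivalence $H$ of $\calD^b (\Lambda'')$ with $H (G (M (\sigma'))) = M (\sigma'')$. Then $F := H G \colon \calD^b (\Lambda') \to \calD^b (\Lambda'')$ is a derived equivalence satisfying $F (M (\sigma')) = M (\sigma'')$, which is exactly what is claimed. (Note that the hypothesis $\| \phi_{\Lambda'} \| = 1$ is in fact not needed separately: it follows from $\| \phi_{\Lambda''} \| = 1$ together with the fact that $\phi$ is a derived invariant.)

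The only point that I expect needs to be spelled out — and it is the sole, mild, obstacle — is the assertion that a triangle equivalence carries boundary complexes to boundary complexes. That perfect complexes are preserved is standard, since they constitute the triangulated subcategory of compact objects, an intrinsic notion. That the defining condition on the Auslander--Reiten triangle terminating at $X$ (namely, that its middle term is indecomposable) is preserved follows from the fact that Auslander--Reiten triangles are characterized by a categorical universal property, so $G$ matches them up and in particular preserves the number of arrows terminating at a given vertex of the Auslander--Reiten quiver. Beyond recording these observations, there is nothing substantial to do.
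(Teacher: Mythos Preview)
Your proof is correct and matches the paper's own proof essentially line for line: fix a derived equivalence $G$, use Lemma~\ref{lemma boundary2} to see that $M(\sigma')$ and $M(\sigma'')$ are boundary complexes, observe that $G(M(\sigma'))$ is therefore a boundary complex in $\calD^b(\Lambda'')$, apply Lemma~\ref{lemma boundary1} to get an autoequivalence $H$ with $H(G(M(\sigma'))) = M(\sigma'')$, and set $F = H \circ G$. Your additional remarks (that triangle equivalences preserve boundary complexes, and that the hypothesis $\|\phi_{\Lambda'}\| = 1$ is redundant) are correct elaborations that the paper leaves implicit.
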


\begin{proof}
Let $G \colon \calD^b (\Lambda') \to \calD^b (\Lambda'')$ be a derived equivalence. We know from Lemma~\ref{lemma boundary2} that $M (\sigma')$ and $M (\sigma'')$ are boundary complexes in $\calD^b (\Lambda')$ and $\calD^b (\Lambda'')$, respectively. Consequently, $G (M (\sigma'))$ and $M (\sigma'')$ are boundary complexes in $\calD^b (\Lambda'')$. Thus, by Lemma~\ref{lemma boundary1}, there exists an autoequivalence $H$ of $\calD^b (\Lambda'')$ such that $H (G (M (\sigma'))) = M (\sigma'')$. We take $F = H \circ G$.
\end{proof}

\subsection{One-point coextensions}

If $\Lambda$ is a bound quiver and $M$ is a representation of $\Lambda$, then one defines a bound quiver $[M] \Lambda$, called the one-point coextension of $\Lambda$ by $M$ (see for example~\cite{BarotLenzing}). However, usually $[M] \Lambda$ is not monomial, even if $\Lambda$ is. Consequently, in the paper we only consider one-point coextensions of the form $[M (\sigma)] \Lambda$, where $\Lambda$ is a gentle bound quiver and $\sigma$ is a maximal path in $\Lambda$.

Let $\Lambda = (\Delta, R)$ be a gentle bound quiver and $\sigma$ a maximal path in $\Lambda$. We define the one-point coextension $[M (\sigma)] \Lambda$ of $\Lambda$ by $\sigma$ as follows: $[M (\sigma)] \Lambda := (\Delta', R')$, where
\begin{enumerate}

\item
$\Delta'$ is obtained from $\Delta$ by adding a new arrow $\alpha$ starting at $t \sigma$ and terminating at a new vertex $x$;

\item
$R'$ is the union of $R$ and the set of the relations $\alpha \alpha'$, where $\alpha'$ is an arrow in $\Delta$ terminating at $t \sigma$, which is not the terminating arrow of $\sigma$.

\end{enumerate}
We write shortly $[\sigma] \Lambda$ instead of $[M (\sigma)] \Lambda$ and call $[\sigma] \Lambda$ the coextension of $\Lambda$ by $\sigma$.

One easily gets the following.

\begin{lemma}
Let $\Lambda$ be gentle bound quiver. If $\sigma$ is a maximal path in $\Lambda$, then $[\sigma] \Lambda$ is a gentle bound quiver.
\end{lemma}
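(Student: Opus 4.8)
The plan is to verify directly that $[\sigma]\Lambda = (\Delta', R')$ satisfies each clause in the definition of a gentle bound quiver. Write $\sigma = \alpha_1 \cdots \alpha_l$, so that $\alpha_1$ is the terminating arrow of $\sigma$, $t\alpha_1 = t\sigma$, and $l = \ell(\sigma) \geq 1$ since $\Delta$ has no isolated vertices. Let $\alpha$ be the adjoined arrow, with $s\alpha = t\sigma$ and $t\alpha = x$ the new vertex; thus $R' = R \cup \{\alpha\alpha' : \alpha' \in \Delta_1,\ t\alpha' = t\sigma,\ \alpha' \neq \alpha_1\}$, a set of paths of length $2$, so the first defining clause of gentleness is automatic.

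The only structural changes in passing from $\Delta$ to $\Delta'$ are that $x$ is a new sink and $\alpha$ is a new arrow with source $t\sigma$; in particular $\alpha$ can occur in a path of $\Delta'$ only as the leftmost (terminating) arrow. Connectedness of $\Delta'$ is then immediate, as is the nilpotency axiom of a bound quiver: if every path of length $n$ in $\Delta$ has a subpath in $R$, then every path of length $n+1$ in $\Delta'$ either lies entirely in $\Delta$ or has the form $\alpha\tau$ with $\tau$ a path in $\Delta$ of length $n$, and in both cases contains a subpath belonging to $R \subseteq R'$. Hence $[\sigma]\Lambda$ is a monomial bound quiver.

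The heart of the matter — and the only point where maximality of $\sigma$ enters — is the local configuration at the vertex $t\sigma$. Maximality says $\sigma$ cannot be prolonged at its terminating end, i.e.\ there is no $\delta \in \Delta_1$ with $s\delta = t\sigma$ and $\delta\alpha_1 \notin R$; since by gentleness clause (4) at most one $\delta \in \Delta_1$ satisfies $s\delta = t\sigma$ and $\delta\alpha_1 \in R$, it follows that \emph{at most one arrow of $\Delta$ starts at $t\sigma$}, and such an arrow $\delta$, if present, has $\delta\alpha_1 \in R$, hence — by clause (4) applied to $\delta$ — $\delta\alpha' \notin R$ for the other arrow $\alpha'$ of $\Delta$ terminating at $t\sigma$, if any. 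Clause (2) for $[\sigma]\Lambda$ is now clear: at $t\sigma$ at most two arrows start (the possible $\delta$ and the new $\alpha$) and at most two terminate, and no other vertex is affected. For clauses (3) and (4), the only arrows whose conditions can change are those adjacent to $\alpha$, namely $\alpha$ itself, $\alpha_1$, the other arrow terminating at $t\sigma$, and $\delta$, and the only compositions that change are those of the form $\alpha\alpha'$ with $t\alpha' = t\sigma$, with $\alpha\alpha' \in R'$ exactly when $\alpha' \neq \alpha_1$; a short case analysis over those four arrows, fed by the italicized consequence of maximality, shows that in each instance precisely one of the two "at most one" alternatives holds. The argument is otherwise pure bookkeeping; I expect the only step requiring thought to be the deduction from maximality above, which is exactly what guarantees that adjoining $\alpha$ at $t\sigma$ creates neither a third arrow there nor a forbidden pair of relations or non-relations.
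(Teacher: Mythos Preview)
Your argument is correct and complete. The paper's own ``proof'' consists of the single word \emph{Exercise}, so you have in fact supplied the verification the paper omits; your approach---direct check of each gentleness clause, with the key observation being that maximality of $\sigma$ at its terminating end forces at most one arrow of $\Delta$ to start at $t\sigma$ (and that arrow, if present, already has a relation with $\alpha_1$)---is exactly the intended one.
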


\begin{proof}
Exercise.
\end{proof}

We also have the following consequence of a result of Barot and Lenzing~\cite{BarotLenzing}*{Theorem~1}.

\begin{proposition} \label{proposition BL}
Let $\sigma'$ and $\sigma''$ be maximal paths in gentle bound quivers $\Lambda'$ and $\Lambda''$, respectively. If there exists a triangle equivalence $F \colon \calD^b (\Lambda') \to \calD^b (\Lambda'')$ such that $F (M (\sigma')) = M (\sigma'')$, then $[\sigma'] \Lambda'$ and $[\sigma''] \Lambda''$ are derived equivalent. \qed
\end{proposition}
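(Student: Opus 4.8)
The plan is to deduce the proposition directly from the theorem of Barot and Lenzing cited in the statement, after matching our notion of one-point coextension with the operation to which that theorem applies. In the form we need it, \cite{BarotLenzing}*{Theorem~1} asserts the following: if $A$ and $B$ are finite-dimensional algebras, $F \colon \calD^b (A) \to \calD^b (B)$ is a triangle equivalence, and $M$ is an $A$-module (viewed as a complex concentrated in degree $0$) such that $F (M)$ is again isomorphic to a $B$-module, then the one-point extensions $A [M]$ and $B [F (M)]$ are derived equivalent.

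First I would convert extensions into coextensions by passing to opposite algebras. The duality $D = \mathrm{Hom}_k (-, k)$ is exact and induces, for every bound quiver $\Lambda$, a triangle equivalence $\calD^b (\Lambda)^{\mathrm{op}} \to \calD^b (\Lambda^{\mathrm{op}})$; conjugating $F$ by these equivalences yields a triangle equivalence $G \colon \calD^b ((\Lambda')^{\mathrm{op}}) \to \calD^b ((\Lambda'')^{\mathrm{op}})$ with $G (D M (\sigma')) = D M (\sigma'')$. Since $D M (\sigma')$ and $D M (\sigma'')$ are again modules, \cite{BarotLenzing}*{Theorem~1} provides a derived equivalence between $(\Lambda')^{\mathrm{op}} [D M (\sigma')]$ and $(\Lambda'')^{\mathrm{op}} [D M (\sigma'')]$. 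Because a one-point coextension is, up to the duality $D$, the opposite algebra of the corresponding one-point extension of the opposite algebra, i.e.\ $[M (\sigma)] \Lambda = (\Lambda^{\mathrm{op}} [D M (\sigma)])^{\mathrm{op}}$, and because derived equivalence is preserved under passing to opposite algebras, one concludes $[M (\sigma')] \Lambda' \simeq_{\der} [M (\sigma'')] \Lambda''$.

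Finally, it remains to verify that the bound quiver $[\sigma] \Lambda$ introduced above is isomorphic, as an algebra, to the one-point coextension $[M (\sigma)] \Lambda$; this is a routine computation of the Gabriel quiver and of a minimal admissible ideal of the coextension. The new vertex $x$ is a sink whose indecomposable projective is the simple $S_x$, and the indecomposable injective $I_x$ at $x$ has socle $S_x$ with $I_x / S_x \cong M (\sigma)$. Since $M (\sigma)$ is the string module of a maximal path ending at $t \sigma$, its socle is exactly $S_{t \sigma}$, which accounts for the single new arrow $\alpha \colon t \sigma \to x$, and the gentleness of $\Lambda$ together with the maximality of $\sigma$ forces the only further relations to be the paths $\alpha \alpha'$, where $\alpha'$ runs over the arrows terminating at $t \sigma$ other than the terminating arrow of $\sigma$. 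I do not expect a genuine obstacle in either step; the only point calling for some care is the bookkeeping with opposite algebras in the second paragraph, which has to be carried out consistently with the conventions fixed in this subsection (and would be bypassed altogether if one invokes \cite{BarotLenzing}*{Theorem~1} directly in its coextension form).
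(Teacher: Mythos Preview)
Your proposal is correct and follows exactly the route the paper indicates: the paper records the proposition as an immediate consequence of \cite{BarotLenzing}*{Theorem~1} and gives no further proof (note the \qed\ in the statement), so you are simply supplying the details the paper omits---the passage from extensions to coextensions via opposite algebras, and the identification of the combinatorially defined $[\sigma]\Lambda$ with the algebraic one-point coextension $[M(\sigma)]\Lambda$, the latter already being asserted in the paragraph preceding the proposition. Both verifications are routine and carried out correctly.
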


\subsection{Brenner--Butler reflections}
Let $\Lambda = (\Delta, R)$ be a gentle bound quiver. Let $x$ be a vertex in $\Delta$ such that there is no $\alpha \in \Delta_1$ with $s \alpha = x = t \alpha$ and for each $\alpha \in \Delta_1$ with $s \alpha = x$ there exists $\beta_\alpha \in \Delta_1$ with $t \beta_\alpha = x$ and $\alpha \beta_\alpha \not \in R$. We define a bound quiver $\Lambda' = (\Delta', R')$ in the following way: $\Delta_0' = \Delta_0$, $\Delta_1' = \Delta_1$,
\begin{align*}
s_{\Delta'} \alpha & =
\begin{cases}
x & \text{if } t_\Delta \alpha = x,
\\
s_\Delta \beta_\alpha & \text{if } s_\Delta \alpha = x,
\\
s_\Delta \alpha & \text{otherwise},
\end{cases}
\\
t_{\Delta'} \alpha & =
\begin{cases}
s_\Delta \alpha & \text{if } t_\Delta \alpha = x,
\\
x & \text{if there exists $\beta \in \Delta_1$ such that}
\\
& \qquad \text{$t_\Delta \beta = x$, $s_\Delta \beta = t_\Delta \alpha$ and $\beta \alpha \in R$},
\\
t_\Delta \alpha & \text{otherwise},
\end{cases}
\end{align*}
and $R'$ consists of the following relations:
\begin{itemize}

\item
$\alpha \beta$, where $\alpha \beta \in R$ and $t_\Delta \alpha \neq x \neq s_\Delta \alpha$,

\item
$\alpha \beta_\alpha$, where $\alpha \in \Delta_1$ and $s_\Delta \alpha = x$,

\item
$\alpha \beta$, where $\alpha, \beta \in \Delta_1$ are such that $t_\Delta \alpha = x$ and $\gamma \beta \in R$ for some $\gamma \in \Delta_1$, $\gamma \neq \alpha$, with $t_\Delta \gamma = x$.

\end{itemize}
The following pictures, where the relations are indicated by dots, illustrate the situation: if locally (in a neighbourhood of $x$) $\Delta$ has the form
\[
\vcenter{\xymatrix@=0.5\baselineskip{
\bullet & & & & \bullet \ar[lldd]^\beta & & & & \bullet \ar[lllldddd]_(.25){\gamma'}|\hole
\\
& \ar@{.}[rr] & & \ar@{.}[rr] & &
\\
& & x \ar[lluu]_\alpha \ar[lldd]_{\alpha'}
\\
& \ar@{.}[rr] & & \ar@{.}[rr] & &
\\
\bullet & & & & \bullet  \ar[lluu]_{\beta'} & & & & \bullet \ar[lllluuuu]^(0.25)\gamma}},
\]
then locally $\Delta'$ has the form
\[
\vcenter{\xymatrix@=0.5\baselineskip{
\bullet & & & & \bullet \ar[lllldddd]_(0.25){\alpha'}|\hole & & & & \bullet \ar[lldd]_{\gamma'}
\\
& & & \ar@{.}[rr] & & \ar@{.}[rr] & &
\\
& & & & & & x \ar[lluu]_\beta \ar[lldd]^{\beta'}
\\
& & & \ar@{.}[rr] & & \ar@{.}[rr] & &
\\
\bullet & & & & \bullet  \ar[lllluuuu]^(0.25)\alpha & & & & \bullet \ar[lluu]^\gamma}}.
\]
In the above situation we say that $\Lambda'$ is obtained from $\Lambda$ by applying the (generalized APR-)reflection at $x$. The bound quiver $\Lambda'$ is derived equivalent to $\Lambda$ (see~\cite{BobinskiMalicki}*{Section~1}).

We will need the following application of this operation, which is a special version of~\cite{BobinskiMalicki}*{Lemma~1.1}.

\begin{lemma} \label{lemma shift}
Let $\Lambda = (\Delta, R)$ be a gentle bound quiver such that $\Delta$ is of the form
\[
\xymatrix{
& \bullet \ar[rr]|{\textstyle \cdots} & & \bullet \ar[rd]^{\alpha_1}
\\
\bullet \ar[ru]^{\alpha_p} & & & & \bullet \ar@/_/[llll]_\gamma \ar@/^/[llll]^\beta}
\]
for $p \in \bbN_+$. Assume that $\alpha_{i - 1} \alpha_i \not \in R$ and $\alpha_i \alpha_{i + 1} \in R$ for some $i \in [2, p - 1]$. Then $\Lambda$ is derived equivalent to the quiver $\Lambda' := (\Delta, R')$, where
\[
R' := (R \setminus \{ \alpha_i \alpha_{i + 1} \}) \cup \{ \alpha_{i - 1} \alpha_i \}.
\]
\end{lemma}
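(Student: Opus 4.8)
The plan is to realise $\Lambda'$ as the outcome of a single generalized APR-reflection applied to $\Lambda$; then the derived equivalence follows at once from the fact recalled above that a reflection of a gentle bound quiver is derived equivalent to it. So the whole argument is one admissibility check plus one local computation.

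First I would fix convenient notation for the vertices. Arrange the vertices of $\Delta$ around the oriented cycle as $y_0, \ldots, y_p$ so that $\alpha_k \colon y_{p - k} \to y_{p - k + 1}$ for $k \in [1, p]$ and $\beta, \gamma \colon y_p \to y_0$. Every vertex $y_m$ with $m \in [1, p - 1]$ then has exactly one incoming arrow, namely $\alpha_{p - m + 1}$, and exactly one outgoing arrow, namely $\alpha_{p - m}$, and the only pair of consecutive arrows at $y_m$ is $\alpha_{p - m} \alpha_{p - m + 1}$. With this labelling the relation $\alpha_i \alpha_{i + 1} \in R$ is the one meeting at the vertex $y_{p - i}$, and the relation $\alpha_{i - 1} \alpha_i$ we want to introduce would meet at $y_{p - i + 1}$; since $i \in [2, p - 1]$, both $y_{p - i}$ and $y_{p - i + 1}$ lie in the range $[1, p - 1]$, so they are adjacent ``internal'' vertices of the cycle in the above sense.

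Next I would check that the reflection at $x := y_{p - i + 1}$ is admissible: $\Delta$ has no loop, the unique arrow out of $x$ is $\alpha_{i - 1}$, the unique arrow into $x$ is $\alpha_i$, and $\alpha_{i - 1} \alpha_i \notin R$ by hypothesis, so one may take $\beta_{\alpha_{i - 1}} := \alpha_i$. Let $\Lambda^\ast = (\Delta^\ast, R^\ast)$ be the resulting reflected bound quiver; then $\Lambda^\ast \simeq_{\der} \Lambda$. Now I would compute $\Delta^\ast$ and $R^\ast$ directly from the definition of the reflection. From $t_\Delta \alpha_i = x$ one gets $\alpha_i \colon y_{p - i + 1} \to y_{p - i}$ in $\Delta^\ast$ (its direction is reversed); from $s_\Delta \alpha_{i - 1} = x$, together with the fact that the only arrow into $x$ is $\alpha_i$, one gets $\alpha_{i - 1} \colon y_{p - i} \to y_{p - i + 2}$; and here is the single place where the hypothesis $\alpha_i \alpha_{i + 1} \in R$ enters — it forces the target of $\alpha_{i + 1}$ to be moved to $x$, so $\alpha_{i + 1} \colon y_{p - i - 1} \to y_{p - i + 1}$ in $\Delta^\ast$. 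All other arrows are untouched because $x$ is internal, and running through the three clauses defining $R^\ast$ shows that, written in terms of the arrow names, $R^\ast = (R \setminus \{\alpha_i \alpha_{i + 1}\}) \cup \{\alpha_{i - 1} \alpha_i\}$.

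Finally I would observe that $\Delta^\ast$ is again a cycle of length $p + 1$ carrying a double arrow between two neighbouring vertices, and that the bijection $\varphi$ interchanging $y_{p - i}$ and $y_{p - i + 1}$, fixing every other vertex and fixing each arrow, is an isomorphism of quivers $\Delta^\ast \to \Delta$; it carries $R^\ast$ onto $(R \setminus \{\alpha_i \alpha_{i + 1}\}) \cup \{\alpha_{i - 1} \alpha_i\} = R'$, so $\Lambda^\ast \cong \Lambda'$ as bound quivers. Hence $\Lambda \simeq_{\der} \Lambda^\ast \simeq_{\der} \Lambda'$, as claimed. The only genuine work is the bookkeeping in the previous paragraph: applying the somewhat intricate definition of the reflection correctly, in particular obtaining the rerouting of $\alpha_{i + 1}$ and verifying that the three clauses produce exactly $R^\ast$ and nothing more, and then checking that $\varphi$ really is a quiver isomorphism; every other step is immediate from the material already recalled.
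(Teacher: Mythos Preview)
Your proof is correct and follows exactly the paper's approach: the paper's entire proof is the single sentence ``We apply the reflection at $t\alpha_i$,'' and your argument is precisely the detailed verification that this reflection is admissible and produces a bound quiver isomorphic to $\Lambda'$. The bookkeeping (the rerouting of $\alpha_{i-1}$, $\alpha_i$, $\alpha_{i+1}$, the computation of $R^\ast$, and the vertex-swap isomorphism $\varphi$) is carried out accurately.
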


\begin{proof}
We apply the reflection at $t \alpha_i$.
\end{proof}

\section{Proof of the main result} \label{section proof}

The aim of this section is to prove that the bound quivers $\Lambda (p, r)$, $p \in \bbN_+$, $r \in [-1, p + 1]$, $(p, r) \neq (1, -1)$, are pairwise not derived equivalent. The following observation is crucial.

\begin{lemma} \label{lemma extension}
Let $p \in \bbN_+$ and $r \in [-1, p - 1]$, $(p, r) \neq (1, -1)$. If $\sigma$ is a maximal path in $\Lambda_0 (p, r)$, then $[\sigma] \Lambda_0 (p, r)$ is derived equivalent to $\Lambda_0 (p + 1, r)$.
\end{lemma}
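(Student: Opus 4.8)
The plan is to analyze the possible maximal paths $\sigma$ in $\Lambda_0(p,r)$ case by case, compute the one-point coextension $[\sigma]\Lambda_0(p,r)$ explicitly as a bound quiver, and then transform it into $\Lambda_0(p+1,r)$ using the derived equivalences coming from generalized APR-reflections, in particular the shift operation of Lemma~\ref{lemma shift}. First I would enumerate the maximal paths. For $\Lambda_0(p,r)$ with $r\in[0,p-1]$, the relations are $\alpha_p\beta$, $\alpha_i\alpha_{i+1}$ for $i\in[1,r]$, and $\gamma\alpha_1$; the maximal paths are then $\beta$ composed with the longest subpath of $\alpha_p\cdots\alpha_1$ that avoids the relations (this starts at the arrow after the last relation among the $\alpha_i\alpha_{i+1}$, i.e.\ the path $\beta\alpha_p\cdots\alpha_{r+1}$ when $r<p-1$, or just $\beta$-related pieces when $r=p-1$), the path $\gamma$ (a single arrow, since $\gamma\alpha_1\in R$), and for $r\ge1$ the short paths $\alpha_{i}\alpha_{i-1}\cdots$ obstructed on both ends. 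I would identify which of these attach the new vertex where, and observe that up to symmetry there are only a couple of essentially different shapes of $\sigma$.

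Next, for each such $\sigma$ I would write down $[\sigma]\Lambda_0(p,r)$ using the recipe in the "One-point coextensions" subsection: a new arrow $\alpha$ from $t\sigma$ to a new vertex $x$, together with the relations $\alpha\alpha'$ for every arrow $\alpha'$ terminating at $t\sigma$ other than the terminating arrow of $\sigma$. The key point is that in every case the resulting quiver has $p+1$ arrows along the "long arm" of the cycle (the old $p$ arrows $\alpha_i$ plus the new $\alpha$, or a relabelling thereof) and two parallel arrows $\beta,\gamma$ closing the cycle, so it is abstractly of the same shape as the quiver defining $\Lambda_0(p+1,\cdot)$. What changes from case to case is the precise position of the relations along the long arm: the coextension typically produces a relation involving the new arrow $\alpha$ and possibly leaves the relations $\alpha_i\alpha_{i+1}$ for $i\in[1,r]$ intact, but the relation $\gamma\alpha_1$ (or the analogue $\alpha_{p+1}\beta$) may end up in the "wrong" spot.

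Then I would use Lemma~\ref{lemma shift} repeatedly to slide the misplaced relations along the long arm of the cycle into the canonical positions required by the definition of $\Lambda_0(p+1,r)$, namely $\alpha_{p+1}\beta$, $\alpha_i\alpha_{i+1}$ for $i\in[1,r]$, and $\gamma\alpha_1$. Each application of the shift moves one relation $\alpha_i\alpha_{i+1}$ to the neighbouring $\alpha_{i-1}\alpha_i$ (after a possible reflection to put the quiver in the exact local form required), preserving the derived equivalence class; finitely many such moves bring the bound quiver to the target form. One must also check that the parameter $r$ (the number of consecutive relations starting from $\alpha_1$) is preserved under coextension-plus-shifts, which is where the hypothesis $r\in[-1,p-1]$ and the bookkeeping of how many relations the coextension introduces near $t\sigma$ matter. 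The special cases $r=-1$ (where $\Lambda_0(p+1,-1)$ has a different-looking quiver with an extra arrow $\delta$) and small $p$ would be handled separately but by the same strategy.

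The main obstacle I expect is the case analysis itself: making sure every maximal path $\sigma$ is accounted for, correctly reading off the relations $\alpha\alpha'$ produced by the coextension when $t\sigma$ is a vertex with two incoming arrows (so exactly one new relation appears) versus one incoming arrow (no new relation), and then verifying that the sequence of reflections really lands on $\Lambda_0(p+1,r)$ with the \emph{same} $r$ rather than $r\pm1$. The reflections themselves are routine once the local picture is set up, but keeping track of the relation positions through the coextension and the shifts — especially not accidentally changing the shape of the two parallel arrows or the orientation of the cycle — is the delicate part, and is presumably where the hypothesis $(p,r)\neq(1,-1)$ is used to rule out a degenerate configuration.
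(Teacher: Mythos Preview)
Your plan contains a concrete error in the description of the coextension. The bound quiver $[\sigma]\Lambda_0(p,r)$ is \emph{not} of the same shape as $\Lambda_0(p+1,\cdot)$: by definition the new arrow points from $t\sigma$ to a fresh \emph{sink} vertex, so the result is the original two-cycle quiver with a pendant arm attached, not a longer two-cycle. For instance, when $r\geq 0$ and $\sigma$ is the maximal path with terminating arrow $\beta$, the coextension is
\[
\xymatrix{
& & \bullet \ar[rr]|{\textstyle \cdots} & & \bullet \ar[rd]^{\alpha_1}
\\
\ast & \bullet \ar[l]_\delta \ar[ru]^{\alpha_p} & & & & \bullet \ar@/_/[llll]_\gamma \ar@/^/[llll]^\beta}
\]
with the additional relation $\delta\gamma$. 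Lemma~\ref{lemma shift} only permutes relations along an existing $\alpha$-chain inside a quiver already of the target shape; it cannot absorb the pendant $\ast$ into the cycle. The paper handles this one case with two genuine reflections (first at $\ast$, then again at the image of $\ast$) which change the underlying graph, before any shifts are applied; your outline never introduces such a step. (Your list of maximal paths is also inaccurate: $\gamma$ alone is never maximal since $\alpha_p\gamma\notin R$, and for $r\geq 1$ the maximal path through $\beta$ is $\beta\alpha_1$, not $\beta\alpha_p\cdots\alpha_{r+1}$.)

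More importantly, the paper does not perform the case analysis at all. Because $\|\phi_{\Lambda_0(p,r)}\|=1$, Lemma~\ref{lemma boundary1} says any two boundary complexes in $\calD^b(\Lambda_0(p,r))$ are related by an autoequivalence, and by Lemma~\ref{lemma boundary2} every $M(\sigma)$ for a maximal path $\sigma$ is boundary. Proposition~\ref{proposition BL} then gives $[\sigma']\Lambda_0(p,r)\simeq_{\der}[\sigma'']\Lambda_0(p,r)$ for \emph{every} pair of maximal paths $\sigma',\sigma''$. Hence it suffices to treat a single convenient $\sigma$, and the paper does exactly this: two reflections bring that one coextension to the shape of $\Lambda_0(p+1,\cdot)$, and then $r$ applications of Lemma~\ref{lemma shift} move the relations into position. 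Even with the shape issue fixed, running your enumeration over all maximal paths would be needlessly laborious compared to this reduction.
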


\begin{proof}
Let $\sigma'$ and $\sigma''$ be maximal paths in $\Lambda_0 (p, r)$. From Lemma~\ref{lemma boundary2} we know that $M (\sigma')$ and $M (\sigma'')$ are boundary complexes in $\calD^b (\Lambda_0 (p, r))$. Note (see also~\cite{BobinskiMalicki}*{Lemma~3.1}) that $\| \phi_{\Lambda_0 (p, r)} \| = 1$. Consequently, Lemma~\ref{lemma boundary1} implies that there exists an autoequivalence $F$ of $\calD^b (\Lambda_0 (p, r))$ such that $F (M (\sigma')) = M (\sigma'')$. Now, using Proposition~\ref{proposition BL}, we get that $[\sigma'] \Lambda_0 (p, r)$ and $[\sigma'' ]\Lambda_0 (p, r)$ are derived equivalent.

It follows from the above that we may consider one particular $\sigma$. First assume that $r \geq 0$ and let $\sigma$ be the maximal path whose terminating arrow is $\beta$, i.e.\ $\sigma := \beta \alpha_1$, if $r > 0$, and $\sigma := \beta \alpha_1 \cdots \alpha_p \gamma$, if $r = 0$. Then $[\sigma] \Lambda_0 (p, r)$ is the quiver
\[
\xymatrix{
& & \bullet \ar[rr]|{\textstyle \cdots} & & \bullet \ar[rd]^{\alpha_1}
\\
\ast & \bullet \ar[l]_\delta \ar[ru]^{\alpha_p} & & & & \bullet \ar@/_/[llll]_\gamma \ar@/^/[llll]^\beta}
\]
bound by relations $\alpha_p \beta$, $\alpha_i \alpha_{i + 1}$ for $i \in [1, r]$, $\gamma \alpha_1$ and $\delta \gamma$. If we apply the reflection at the vertex denoted by $\ast$, then we obtain the quiver
\[
\xymatrix{
& \bullet \ar[rr]|{\textstyle \cdots} & & \bullet \ar[rd]^{\alpha_1}
\\
\bullet \ar[ru]^{\alpha_p} & & \ast \ar[ll]_\delta & & \bullet \ar[ll]_\gamma \ar@/^1pc/[llll]^\beta}
\]
bound by relations $\alpha_p \beta$, $\alpha_i \alpha_{i + 1}$ for $i \in [1, r]$ and $\gamma \alpha_1$. Now we apply the reflection at the vertex denoted by $\ast$ and obtain the quiver
\[
\xymatrix{
& \bullet \ar[rr]|{\textstyle \cdots} & & \bullet \ar[r]^{\alpha_1} & \bullet \ar[rd]^\gamma
\\
\bullet \ar[ru]^{\alpha_p} & & & & & \bullet \ar@/_/[lllll]_\delta \ar@/^/[lllll]^\beta}
\]
bound by relations $\alpha_p \beta$, $\alpha_i \alpha_{i + 1}$ for $i \in [1, r]$ and $\delta \gamma$. Finally we shift relations (see Lemma~\ref{lemma shift}) $r$ times and obtain (the bound quiver isomorphic with) $\Lambda_0 (p + 1, r)$.

We proceed similarly if $r = -1$. Namely, we take $\beta \gamma$ as $\sigma$ and apply the refection at the terminating vertex of the new arrow. We leave details to the reader.
\end{proof}

We have the following consequence of Lemma~\ref{lemma extension}.

\begin{corollary} \label{corollary bigger}
Let $p \in \bbN_+$ and $r', r'' \in [-1, p - 1]$, $(p, r') \neq (1, -1) \neq (p, r'')$. If $\Lambda_0 (p, r')$ and $\Lambda_0 (p, r'')$ are derived equivalent, then $\Lambda_0 (q, r')$ and $\Lambda_0 (q, r'')$ are derived equivalent for all $q \geq p$.
\end{corollary}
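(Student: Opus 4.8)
The plan is to induct on $q$, with base case $q = p$ being exactly the hypothesis. For the inductive step, assume $\Lambda_0 (q, r') \simeq_{\der} \Lambda_0 (q, r'')$ for some $q \geq p$; I want to conclude $\Lambda_0 (q + 1, r') \simeq_{\der} \Lambda_0 (q + 1, r'')$. A preliminary bookkeeping point comes first: since $r', r'' \in [-1, p - 1] \subseteq [-1, q - 1]$, and since $(q, r') = (1, -1)$ would force $q = p = 1$ and $r' = -1$, which is excluded by hypothesis (and likewise for $r''$), both pairs $(q, r')$ and $(q, r'')$ are admissible, so that $\Lambda_0 (q, r')$ and $\Lambda_0 (q, r'')$ fall within the scope of Lemma~\ref{lemma extension}.

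Next I would use that $\| \phi_{\Lambda_0 (q, r')} \| = 1 = \| \phi_{\Lambda_0 (q, r'')} \|$; this is the fact already recorded in the proof of Lemma~\ref{lemma extension}, with reference to \cite{BobinskiMalicki}*{Lemma~3.1}. Fix maximal paths $\sigma'$ in $\Lambda_0 (q, r')$ and $\sigma''$ in $\Lambda_0 (q, r'')$. Applying Corollary~\ref{corollary boundary} to the derived equivalent quivers $\Lambda_0 (q, r')$ and $\Lambda_0 (q, r'')$ produces a derived equivalence $F \colon \calD^b (\Lambda_0 (q, r')) \to \calD^b (\Lambda_0 (q, r''))$ with $F (M (\sigma')) = M (\sigma'')$, and then Proposition~\ref{proposition BL} yields $[\sigma'] \Lambda_0 (q, r') \simeq_{\der} [\sigma''] \Lambda_0 (q, r'')$.

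Finally, Lemma~\ref{lemma extension} gives $[\sigma'] \Lambda_0 (q, r') \simeq_{\der} \Lambda_0 (q + 1, r')$ and $[\sigma''] \Lambda_0 (q, r'') \simeq_{\der} \Lambda_0 (q + 1, r'')$, so composing this chain of derived equivalences closes the induction and proves the corollary. There is no genuine difficulty here: all the substance is already contained in Lemma~\ref{lemma extension}, Corollary~\ref{corollary boundary} and Proposition~\ref{proposition BL}, and the only step needing a moment's care is checking that the degenerate pair $(1, -1)$ never intrudes during the induction, which is handled by the elementary observation above.
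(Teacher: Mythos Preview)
Your proof is correct and follows essentially the same approach as the paper's own proof: induction on $q$, invoking Corollary~\ref{corollary boundary} (using $\| \phi_{\Lambda_0 (q, r)} \| = 1$) to align the maximal-path modules under a derived equivalence, Proposition~\ref{proposition BL} to pass to the coextensions, and Lemma~\ref{lemma extension} to identify these coextensions with $\Lambda_0 (q + 1, r')$ and $\Lambda_0 (q + 1, r'')$. Your version is slightly more explicit in checking that the degenerate pair $(1, -1)$ never appears during the induction, which the paper leaves implicit.
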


\begin{proof}
By induction it is enough to prove that $\Lambda_0 (p + 1, r')$ and $\Lambda_0 (p + 1, r'')$ are derived equivalent, provided $\Lambda_0 (p, r')$ and $\Lambda_0 (p, r'')$ are derived equivalent. Let $\sigma'$ and $\sigma''$ be maximal paths in $\Lambda_0 (p, r')$ and $\Lambda_0 (p, r'')$, respectively. Corollary~\ref{corollary boundary} implies that there exists a derived equivalence $F \colon \calD^b (\Lambda_0 (p, r')) \to \calD^b (\Lambda_0 (p, r''))$ such that $F (M (\sigma')) = M (\sigma'')$. Thus $[\sigma'] \Lambda_0 (p, r')$ and $[\sigma''] \Lambda_0 (p, r'')$ are derived equivalent by Proposition~\ref{proposition BL}. Since $[\sigma'] \Lambda_0 (p, r') \simeq_{\der} \Lambda_0 (p + 1, r')$ and $[\sigma''] \Lambda_0 (p, r'') \simeq_{\der} \Lambda_0 (p + 1, r'')$ according to Lemma~\ref{lemma extension}, the claim follows.
\end{proof}

An important role in our proof plays the following result due to Amiot~\cite{Amiot}*{Corollary~4.4}.

\begin{proposition} \label{proposition Amiot}
Let $q \geq 3$ and $-1 \leq r', r'' \leq \frac{q}{2} - 1$. If $r' \neq r''$, then the algebras $\Lambda_0 (q, r')$ and $\Lambda_0 (q, r'')$ are not derived equivalent. \qed
\end{proposition}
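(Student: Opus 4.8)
The plan is to separate the algebras by a derived invariant strictly finer than $\phi_\Lambda$, so the first thing I would do is confirm that the elementary invariants are powerless here. Counting forbidden threads in the case $\|\phi_{\Lambda_0(q,r)}\| = 1$ shows that $\phi_{\Lambda_0(q,r)}$ is supported at the single point $(q, q+2)$ for every admissible $r$, so the Avella--Alaminos--Geiss invariant is constant along the whole family. Worse, the Coxeter polynomial is also blind to $r$: writing down the Cartan matrix $C_r$ by counting paths (all of which are subpaths of the few maximal paths in $\Lambda_0(q,r)$), one gets $\det C_r = 1$, and already for $q = 4$ with $\{r', r''\} = \{0, 1\}$ the characteristic polynomials of the Coxeter transformations $-C_{r'}^{-T} C_{r'}$ and $-C_{r''}^{-T} C_{r''}$ coincide. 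A genuinely finer, geometric invariant is therefore required.

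The tool I would use is the marked-surface model of a gentle algebra together with the homotopy class of its associated line field (grading), which together form a complete derived invariant. The underlying marked surface $(S, M)$ is already recovered from $\phi_\Lambda$: a single $\Phi$-orbit with data $(q, q+2)$ and $\calF' = \emptyset$ yields one boundary component carrying $q$ marked points and no punctures, while the Euler characteristic $|\Delta_0| - |\Delta_1| = -1$ then forces the genus to be one. Thus $(S, M)$ is a once-holed torus with $q$ boundary marked points, and — this is the crucial point — it is the same for every $r$ in the range. This is exactly why $\phi_\Lambda$ is unable to see $r$.

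The parameter $r$ instead enters through the line field $\eta$ encoding the relations. Its winding number around the boundary is pinned down by $(n_\calO, m_\calO)$ and stays constant, but the winding numbers $w_\eta(a)$ and $w_\eta(b)$ around a basis $a, b$ of $H_1(S) \cong \bbZ^2$ are extra data invisible to $\phi_\Lambda$. I would read these off the ribbon graph of $\Lambda_0(q, r)$: moving the block of relations $\alpha_i \alpha_{i+1}$, whose size is governed by $r$, shears the line field along the handle, so that one of the two winding numbers becomes an explicit affine function of $r$. Proving that the orbit of the pair $(w_\eta(a), w_\eta(b))$ under the mapping class group of $(S, M)$ is injective in $r$ on $[-1, q/2 - 1]$ then yields the non-equivalence; the orientation-reversing symmetry of the torus identifies $r$ with $q - 2 - r$, which is precisely why the statement is confined to the lower half $r \le q/2 - 1$.

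The main obstacle is the winding-number computation together with the control of the mapping-class-group action. One must fix the ribbon structure and a concrete representative of $\eta$, compute $w_\eta(a)$ and $w_\eta(b)$ as functions of $r$ with the correct signs and normalizations, and then establish injectivity on the half-range while correctly locating the $r \leftrightarrow q - 2 - r$ symmetry — the step where an off-by-one error or a missed symmetry would either over- or under-count the derived classes. A necessary preliminary, which I would verify directly against the thread computation of the first paragraph, is that the surface itself (and not merely its boundary data) is genuinely independent of $r$.
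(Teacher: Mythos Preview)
The paper does not prove this proposition: it is quoted from Amiot and closed with a qed symbol. So there is no in-paper argument to compare against; the relevant comparison is with Amiot's own method, and your outline is precisely that method. You model $\Lambda_0(q,r)$ on the once-holed torus with marked boundary points, observe that the surface (and hence $\phi_\Lambda$) is insensitive to $r$, and then separate the algebras by the winding-number class of the induced line field modulo the mapping class group. In that sense the plan is the right one.

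That said, what you have written is a strategy, not a proof. The two substantive steps --- computing the winding numbers of the line field along a homology basis as explicit functions of $r$, and showing that the resulting classes are pairwise inequivalent under the mapping class group on the range $-1 \le r \le q/2 - 1$ --- are exactly the technical content of Amiot's argument, and you flag them as obstacles rather than carry them out. You also invoke the assertion that ``surface plus homotopy class of line field'' is a complete derived invariant as though it were available off the shelf; for the torus with one boundary component this is part of what Amiot establishes, and at the time of the cited preprint the general statement was not yet proved. So your proposal correctly identifies the route but does not walk it: to turn it into a proof you would have to reproduce the winding-number computation and the mapping-class-group classification, or cite them --- which is exactly what the present paper does.
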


Now we are ready to prove Theorem~\ref{main theorem bis}.

\begin{proof}[Proof of Theorem~\ref{main theorem bis}]
Let $p', p'' \in \bbN$, $r' \in [-1, p' - 1]$ and $r'' \in [-1, p'' - 1]$ be such that $(p', r') \neq (1, - 1) \neq (p'', r'')$. Obviously, $\Lambda_0 (p', r')$ and $\Lambda_0 (p'', r'')$ are not derived equivalent if $p' \neq p''$ (e.g.\ they have different numbers of vertices). Thus assume that $p' = p''$ and denote this common value by $p$. Choose $q \geq p$ such that $r', r'' \leq \frac{q}{2} - 1$. If $\Lambda_0 (p, r')$ and $\Lambda_0 (p, r'')$ are derived equivalent, then Corollary~\ref{corollary bigger} implies that $\Lambda_0 (q, r')$ and $\Lambda_0 (q, r'')$ are derived equivalent as well. Consequently, $r' = r''$ according to Proposition~\ref{proposition Amiot} and the claim follows.
\end{proof}

\bibsection

\begin{biblist}

\bib{Amiot}{article}{
   author={Amiot, C.},
   title={The derived category of surface algebras: the case of torus with one boundary component},
   eprint={arXiv:1506.02410},
}

\bib{Asashiba1999}{article}{
   author={Asashiba, H.},
   title={The derived equivalence classification of representation-finite selfinjective algebras},
   journal={J. Algebra},
   volume={214},
   date={1999},
   pages={182--221},
}

\bib{AssemHappel}{article}{
   author={Assem, I.},
   author={Happel, D.},
   title={Generalized tilted algebras of type $A_{n}$},
   journal={Comm. Algebra},
   volume={9},
   date={1981},
   pages={2101--2125},
}

\bib{AssemSimsonSkowronski}{book}{
   author={Assem, I.},
   author={Simson, D.},
   author={Skowro{\'n}ski, A.},
   title={Elements of the Representation Theory of Associative Algebras. Vol. 1},
   series={London Math. Soc. Stud. Texts},
   volume={65},
   publisher={Cambridge Univ. Press, Cambridge},
   date={2006},
   pages={x+458},
}

\bib{AssemSkowronski}{article}{
   author={Assem, I.},
   author={Skowro{\'n}ski, A.},
   title={Iterated tilted algebras of type $\tilde{\bf A}_n$},
   journal={Math. Z.},
   volume={195},
   date={1987},
   pages={269--290},
}

\bib{AvellaAlaminos}{article}{
   author={Avella-Alaminos, D.},
   title={Derived classification of gentle algebras with two cycles},
   journal={Bol. Soc. Mat. Mexicana (3)},
   volume={14},
   date={2008},
   pages={177--216},
}

\bib{AvellaAlaminosGeiss}{article}{
   author={Avella-Alaminos, D.},
   author={Geiss, Ch.},
   title={Combinatorial derived invariants for gentle algebras},
   journal={J. Pure Appl. Algebra},
   volume={212},
   date={2008},
   pages={228--243},
}

\bib{BarotLenzing}{article}{
   author={Barot, M.},
   author={Lenzing, H.},
   title={One-point extensions and derived equivalence},
   journal={J. Algebra},
   volume={264},
   date={2003},
   pages={1--5},
}

\bib{Bastian}{article}{
   author={Bastian, J.},
   title={Mutation classes of $\tilde{A}_n$-quivers and derived equivalence classification of cluster tilted algebras of type $\tilde{A}_n$},
   journal={Algebra Number Theory},
   volume={5},
   date={2011},
   pages={567--594},
}

\bib{Beilinson}{article}{
   author={Be{\u\i}linson, A. A.},
   title={Coherent sheaves on ${\bf P}^{n}$ and problems in linear algebra},
   language={Russian},
   journal={Funktsional. Anal. i Prilozhen.},
   volume={12},
   date={1978},
   number={3},
   pages={68--69},
   translation={
      journal={Funct. Anal. Appl.},
      volume={12},
      date={1978},
      pages={214--216},
   }
}

\bib{BekkertMerklen}{article}{
   author={Bekkert, V.},
   author={Merklen, H. A.},
   title={Indecomposables in derived categories of gentle algebras},
   journal={Algebr. Represent. Theory},
   volume={6},
   date={2003},
   pages={285--302},
}

\bib{BialkowskiHolmSkowronski}{article}{
   author={Bia{\l}kowski, J.},
   author={Holm, T.},
   author={Skowro{\'n}ski, A.},
   title={Derived equivalences for tame weakly symmetric algebras having only periodic modules},
   journal={J. Algebra},
   volume={269},
   date={2003},
   pages={652--668},
}

\bib{Bobinski}{article}{
   author={Bobi{\'n}ski, G.},
   title={The almost split triangles for perfect complexes over gentle algebras},
   journal={J. Pure Appl. Algebra},
   volume={215},
   date={2011},
   pages={642--654},
}

\bib{BobinskiBuan}{article}{
   author={Bobi{\'n}ski, G.},
   author={Buan, A. B.},
   title={The algebras derived equivalent to gentle cluster tilted algebras},
   journal={J. Algebra Appl.},
   volume={11},
   date={2012},
   pages={1250012, 26 pp.},
}

\bib{BobinskiGeissSkowronski}{article}{
   author={Bobi{\'n}ski, G.},
   author={Gei{\ss}, Ch.},
   author={Skowro{\'n}ski, A.},
   title={Classification of discrete derived categories},
   journal={Cent. Eur. J. Math.},
   volume={2},
   date={2004},
   pages={19--49},
}

\bib{BobinskiMalicki}{article}{
   author={Bobi{\'n}ski, G.},
   author={Malicki, P.},
   title={On derived equivalence classification of gentle two-cycle algebras},
   journal={Colloq. Math.},
   volume={112},
   date={2008},
   pages={33--72},
}

\bib{BocianHolmSkowronski}{article}{
   author={Bocian, R.},
   author={Holm, T.},
   author={Skowro{\'n}ski, A.},
   title={Derived equivalence classification of one-parametric self-injective algebras},
   journal={J. Pure Appl. Algebra},
   volume={207},
   date={2006},
   pages={491--536},
}

\bib{Brustle}{article}{
   author={Br{\"u}stle, T.},
   title={Derived-tame tree algebras},
   journal={Compositio Math.},
   volume={129},
   date={2001},
   pages={301--323},
}

\bib{BuanVatne}{article}{
   author={Buan, A. B.},
   author={Vatne, D. F.},
   title={Derived equivalence classification for cluster-tilted algebras of type $A_n$},
   journal={J. Algebra},
   volume={319},
   date={2008},
   pages={2723--2738},
}

\bib{ButlerRingel}{article}{
   author={Butler, M. C. R.},
   author={Ringel, C. M.},
   title={Auslander-Reiten sequences with few middle terms and applications to string algebras},
   journal={Comm. Algebra},
   volume={15},
   date={1987},
   pages={145--179},
}

\bib{Gabriel}{article}{
   author={Gabriel, P.},
   title={Unzerlegbare Darstellungen. I},
   journal={Manuscripta Math.},
   volume={6},
   date={1972},
   pages={71--103; correction, ibid. \textbf{6} (1972), 309},
}

\bib{GeigleLenzing}{collection.article}{
   author={Geigle, W.},
   author={Lenzing, H.},
   title={A class of weighted projective curves arising in representation theory of finite-dimensional algebras},
   book={
      title={Singularities, Representation of Algebras, and Vector Bundles},
      series={Lecture Notes in Math.},
      volume={1273},
      publisher={Springer, Berlin},
   },
   date={1987},
   pages={265--297},
}

\bib{Geiss}{article}{
   author={Geiss, Ch.},
   title={Derived tame algebras and Euler-forms},
   note={With an appendix by the author and B. Keller},
   journal={Math. Z.},
   volume={239},
   date={2002},
   pages={829--862},
}

\bib{Happel1987}{article}{
   author={Happel, D.},
   title={On the derived category of a finite-dimensional algebra},
   journal={Comment. Math. Helv.},
   volume={62},
   date={1987},
   pages={339--389},
}

\bib{Happel1988}{book}{
   author={Happel, D.},
   title={Triangulated Categories in the Representation Theory of Finite-dimensional Algebras},
   series={London Math. Soc. Lecture Note Ser. },
   volume={119},
   publisher={Cambridge Univ. Press , Cambridge},
   date={1988},
   pages={x+208},
   isbn={0-521-33922-7},
}

\bib{Happel1991}{article}{
   author={Happel, D.},
   title={Auslander-Reiten triangles in derived categories of finite-di\-men\-sio\-nal algebras},
   journal={Proc. Amer. Math. Soc.},
   volume={112},
   date={1991},
   pages={641--648},
}

\bib{Holm}{article}{
   author={Holm, T.},
   title={Derived equivalence classification of algebras of dihedral, semidihedral, and quaternion type},
   journal={J. Algebra},
   volume={211},
   date={1999},
   pages={159--205},
}

\bib{Kalck}{article}{
   author={Kalck, M.},
   title={Gentle two-cycle algebras and stratifications of derived module categories},
   date={2015},
   status={preprint},
}

\bib{Keller}{article}{
   author={Keller, B.},
   title={Deriving DG categories},
   journal={Ann. Sci. \'Ecole Norm. Sup. (4)},
   volume={27},
   date={1994},
   pages={63--102},
}

\bib{KoenigZimmermann}{collection}{
   author={Koenig, S.},
   author={Zimmermann, A.},
   title={Derived Equivalences for Group Rings},
   note={With contributions by B. Keller, M. Linckelmann, J. Rickard and R. Rouquier},
   series={Lecture Notes in Math. },
   volume={1685},
   publisher={Springer, Berlin},
   date={1998},
   pages={x+246},
   isbn={3-540-64311-7},
}

\bib{PogorzalySkowronski}{article}{
   author={Pogorza{\l}y, Z.},
   author={Skowro{\'n}ski, A.},
   title={Self-injective biserial standard algebras},
   journal={J. Algebra},
   volume={138},
   date={1991},
   pages={491--504},
}

\bib{Ricard1989a}{article}{
   author={Rickard, J.},
   title={Morita theory for derived categories},
   journal={J. London Math. Soc. (2)},
   volume={39},
   date={1989},
   pages={436--456},
}

\bib{Ricard1989b}{article}{
   author={Rickard, J.},
   title={Derived categories and stable equivalence},
   journal={J. Pure Appl. Algebra},
   volume={61},
   date={1989},
   pages={303--317},
}

\bib{SchroerZimmermann}{article}{
   author={Schr{\"o}er, J.},
   author={Zimmermann, A.},
   title={Stable endomorphism algebras of modules over special biserial algebras},
   journal={Math. Z.},
   volume={244},
   date={2003},
   pages={515--530},
}

\bib{SkowronskiWaschbusch}{article}{
   author={Skowro{\'n}ski, A.},
   author={Waschb{\"u}sch, J.},
   title={Representation-finite biserial algebras},
   journal={J. Reine Angew. Math.},
   volume={345},
   date={1983},
   pages={172--181},
}

\bib{Vossieck}{article}{
   author={Vossieck, D.},
   title={The algebras with discrete derived category},
   journal={J. Algebra},
   volume={243},
   date={2001},
   pages={168--176},
}
		
\end{biblist}

\end{document}